\documentclass{article}
\usepackage{graphicx}
\usepackage[utf8]{inputenc}
\usepackage{latexsym, amsmath, amsthm, amssymb, enumerate}
\newtheorem{cl}{Claim}[section]
\newtheorem{prop}{Proposition}[section]
\newtheorem{theorem}{Theorem}[section]
\newtheorem{definition}{Definition}[section]
\newtheorem{conjecture}{Conjecture}[section]
\newtheorem{corollary}{Corollary}[section]
\newtheorem{example}{Example}[section]
\newtheorem{lemma}{Lemma}[section]

\newtheorem*{lem:rectangles}{Lemma \ref{lem:rectangles}}
\usepackage[usenames, dvipsnames]{color}
\usepackage[title]{appendix}
\usepackage{caption}
\usepackage{graphicx}
\usepackage{amsmath}
\usepackage{mathtools}
\usepackage{breqn}
\usepackage{fancyhdr}
\usepackage{setspace,caption}
\usepackage{geometry}
\usepackage{lipsum}
\usepackage{makecell}
\usepackage{array}
\usepackage{indentfirst}
\usepackage[english]{babel}
\usepackage[symbol]{footmisc}
\usepackage{chngcntr}
\usepackage{tabu}
\usepackage{tcolorbox}
\usepackage{xhfill}
\counterwithin{table}{section}

\newcolumntype{C}[1]{>{\centering\arraybackslash}m{#1}}
\newcolumntype{N}{@{}m{0pt}@{}}
\sloppy
\fancyhf{}

\setlength{\tabcolsep}{2pt}
\setlength{\belowcaptionskip}{-10pt}

\makeatletter
\definecolor{light-gray}{gray}{0.7}
\definecolor{light-gray2}{gray}{0.85}

\newcommand{\thickhline}{%
    \noalign {\ifnum 0=`}\fi \hrule height 1pt
    \futurelet \reserved@a \@xhline
}
\newcolumntype{[}{@{\vrule width 1pt\hskip\tabcolsep}} \newcolumntype{]}{@{\hskip\tabcolsep\vrule width 1pt}}
\newcolumntype{"}{@{\hskip\tabcolsep\vrule width 1pt\hskip\tabcolsep}}
\makeatother
\geometry{verbose,tmargin=2.5cm,bmargin=2.5cm,lmargin=2.5cm,rmargin=2.5cm, nomarginpar}

\begin{document}

\onehalfspacing

\begin{center}

{\Large\textbf{On a Conjecture by Hayashi on Finite Connected Quandles}} \\
{\textcolor{white}{.}} \\
{\Large{António Lages\footnote{Supported by FCT LisMath fellowship PD/BD/145348/2019.}\,\,\,\,\,\,\,\,\,\,Pedro Lopes\footnote{Supported by project FCT UIDB/04459/2020; member of CAMGSD.}}} \\
{\textcolor{white}{.}} \\
{\small{Department of Mathematics}} \\
{\small{Instituto Superior Técnico}} \\
{\small{Universidade de Lisboa}} \\
{\small{Av. Rovisco Pais, 1049-001 Lisbon, Portugal}} \\
{\small{\texttt{\{antonio.lages,pedro.f.lopes\}@tecnico.ulisboa.pt}}} \\
{\textcolor{white}{.}} \\

\end{center}

\pagenumbering{arabic}

\pagestyle{plain}

\begin{abstract}
A quandle is an algebraic structure whose binary operation is idempotent, right-invertible and right self-distributive. Right-invertibility ensures right translations are permutations and right self-distributivity ensures further they are automorphisms. For finite connected quandles, all right translations have the same cycle structure, called the profile of the connected quandle. Hayashi conjectured that the longest length in the profile of a finite connected quandle is a multiple of the remaining lengths. We prove that this conjecture is true for profiles with at most five lengths. \\
{\textcolor{white}{.}}\\
\textbf{Keywords}: quandles; right translations; cycles; profiles; Hayashi's conjecture.\\
\textbf{MSC2020}: 20N99.

\end{abstract}

\section{Introduction}\label{sctn:intro}

%A quandle is an algebraic structure satisfying idempotency, right-invertibility, and right self-distributivity (\cite{Joyce}, \cite{Matveev}). These axioms constitute an algebraic counterpart to the Reidemeister moves (\cite{Kauffman}) of knot theory and quandles give rise to efficient ways of telling knots apart (e.g., via counting colorings, \cite{DL}, \cite{BL}). Thus, we are led to study the structure of quandles.\par

%Right-invertibility implies that right translations are permutations of the elements of the quandle under study. %This remark had already been made in \cite{Joyce} and \cite{Brieskorn}, but it was not until \cite{Lopes_Roseman} that this approach was put to use.
%In particular, for a connected quandle, every right translation has the same cycle structure, which we refer to as the profile of the finite connected quandle. Hayashi (\cite{Hayashi}) conjectured that the longest length in the profile of a finite connected quandle is a multiple of the remaining ones. Here, we prove this conjecture is true for a profile with at most five lengths (see Main Theorem below) hence extending a result by Watanabe (\cite{Watanabe}). In the course of our arguments, we  generalize an obstruction on the profile of finite connected quandles due to Rehman (\cite{Rehman}).\par

%\section{Background material and results}\label{sctn:background}

We first define the algebraic structure known as \emph{quandle}, introduced independently in \cite{Joyce} and \cite{Matveev}.

\begin{definition}[Quandle]\label{def:quandle}
Let $X$ be a set equipped with a binary operation denoted by $*$. The pair $(X,*)$ is said to be a $\emph{quandle}$ if, for each $i, j, k\in X$,
\begin{enumerate}
\item $i*i=i$ (idempotency);
\item $\exists ! x\in X: x*j=i$ (right-invertibility);
\item $(i*j)*k=(i*k)*(j*k)$ (right self-distributivity).
\end{enumerate}
\end{definition}

To each quandle $(X,\ast)$ of order $n$ we associate a \emph{quandle table}. This is an $n\times n$ table whose element in row $i$ and column $j$ is $i\ast j$, for every $i,j\in X$. In the sequel, quandle tables will have an extra $0$-th column (where we display the $i$'s) and an extra $0$-th row (where we display the $j$'s) to improve legibility.

\begin{example}
Table \ref{table:1} is a quandle table for $Q_{9,4}$, a quandle of order $9$, see \cite{Vendramin}.

\begin{figure}[htbp]\centering
{\renewcommand{\arraystretch}{1.2}
\renewcommand{\tabcolsep}{6pt}
\begin{tabular}{|c|c c c c c c c c c|}
    \hline
    $*$ & 1 & 2 & 3 & 4 & 5 & 6 & 7 & 8 & 9\\
    \hline
    1 & 1 & 3 & 2 & 7 & 8 & 9 & 4 & 5 & 6\\
    2 & 3 & 2 & 1 & 9 & 6 & 5 & 8 & 7 & 4\\
    3 & 2 & 1 & 3 & 5 & 4 & 7 & 6 & 9 & 8\\
    4 & 5 & 7 & 9 & 4 & 1 & 8 & 2 & 6 & 3\\
    5 & 6 & 4 & 8 & 2 & 5 & 1 & 9 & 3 & 7\\
    6 & 7 & 9 & 5 & 8 & 3 & 6 & 1 & 4 & 2\\
    7 & 8 & 6 & 4 & 3 & 9 & 2 & 7 & 1 & 5\\
    8 & 9 & 5 & 7 & 6 & 2 & 4 & 3 & 8 & 1\\
    9 & 4 & 8 & 6 & 1 & 7 & 3 & 5 & 2 & 9\\
    \hline
\end{tabular}}
\captionof{table}{Quandle table for $Q_{9,4}$.}\label{table:1}
\end{figure}
\end{example}

For a quandle $(X,*)$ and for $i\in X$, we let $R_i: X\rightarrow X, j\mapsto j*i$ be the \emph{right translation} by $i$ in $(X,*)$ and $L_i: X\rightarrow X, j\mapsto i*j$ be the \emph{left translation} by $i$ in $(X,*)$. For example, the right translation by $1$ in $Q_{9,4}$ is $R_1=(1)(2\,\,3)(4\,\,5\,\,6\,\,7\,\,8\,\,9)$ and the left translation by $1$ in $Q_{9,4}$ is $L_1=(1)(2\,\,3)(4\,\,7)(5\,\,8)(6\,\,9)$. Axioms \textit{2.} and \textit{3.} in Definition \ref{def:quandle} ensure that each right translation in a quandle is an automorphism of that quandle.

The group $\langle R_i:i\in X\rangle$ generated by the right translations of a quandle $(X,*)$ is the \emph{right multiplication group}. A quandle $(X,*)$ is \emph{connected} if its right multiplication group acts transitively on $X$ and it is \emph{latin} if each of its left translations is a permutation of $X$. For instance, $Q_{9,4}$ is both connected and latin (by inspection of Table \ref{table:1}). In particular, a latin quandle is connected.

For a permutation $f$ on a finite set $X$, the \emph{cycle structure} of $f$ is the sequence $(1^{c_1}, 2^{c_2}, 3^{c_3},\dots)$, such that $c_m$ is the number of $m$-cycles in a decomposition of $f$ into disjoint cycles. For convenience, we omit entries with $c_i=0$ and we drop $c_i$ whenever $c_i=1$. For example, the right translation by $1$ in $Q_{9,4}$ has the cycle structure $(1, 2, 6)$ and the left translation by $1$ in $Q_{9,4}$ has the cycle structure $(1, 2^4)$.

For any mapping $f$ on a set $X=\{x_1,\dots, x_n\}$, consider the sequence $(\lvert f^{-1}(x_1)\rvert,\dots,\lvert f^{-1}(x_n)\rvert)$ and define the \emph{injectivity pattern} of $f$ as the previous sequence ordered in a nondecreasing fashion. For instance, the injectivity pattern of both the right and the left translation by $1$ in $Q_{9,4}$ is the tuple $(1,1,\dots,1)$.

\begin{lemma}\label{lem:connected}
In a connected quandle $(X,*)$, all right translations have the same cycle structure and all left translations have the same injectivity pattern.
\end{lemma}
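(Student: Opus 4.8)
The plan is to exploit the fact, already noted in the excerpt, that every right translation is an automorphism of $(X,\ast)$, together with the observation that conjugating a translation by an automorphism produces another translation of the same kind. First I would record the basic conjugation identities. Let $\phi$ be any automorphism of $(X,\ast)$; then for every $i,j\in X$,
\[
\phi\, R_i\, \phi^{-1}(j)=\phi\big(\phi^{-1}(j)\ast i\big)=\phi\big(\phi^{-1}(j)\big)\ast \phi(i)=j\ast\phi(i)=R_{\phi(i)}(j),
\]
and likewise
\[
\phi\, L_i\, \phi^{-1}(j)=\phi\big(i\ast \phi^{-1}(j)\big)=\phi(i)\ast j=L_{\phi(i)}(j),
\]
where both computations use only that $\phi$ is a bijective homomorphism. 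Hence $\phi R_i\phi^{-1}=R_{\phi(i)}$ and $\phi L_i\phi^{-1}=L_{\phi(i)}$. Since Axioms 2 and 3 guarantee that each right translation is an automorphism, and automorphisms are closed under composition and inversion, every element $g$ of the right multiplication group $\langle R_i:i\in X\rangle$ is an automorphism of $(X,\ast)$; in particular the two identities hold for every such $g$.

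Next I would invoke connectivity. Given $i,j\in X$, transitivity of the right multiplication group yields some $g\in\langle R_i:i\in X\rangle$ with $g(i)=j$. The identities above then give $R_j=R_{g(i)}=g\,R_i\,g^{-1}$ and $L_j=L_{g(i)}=g\,L_i\,g^{-1}$, so any two right translations are conjugate, and any two left translations are conjugate, by a common bijection $g$ of $X$.

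For the right translations the conclusion is then immediate: conjugate permutations have the same cycle structure, so all the $R_i$ share a single cycle structure. For the left translations---which need not be permutations---I would argue on the fibres instead. For any $w\in X$, the relation $L_j=g L_i g^{-1}$ gives $L_j^{-1}(w)=g\big(L_i^{-1}(g^{-1}(w))\big)$, and since $g$ is a bijection $\lvert L_j^{-1}(w)\rvert=\lvert L_i^{-1}(g^{-1}(w))\rvert$. As $w$ ranges over $X$ the point $g^{-1}(w)$ ranges over all of $X$, so the multisets $\{\lvert L_j^{-1}(w)\rvert\}_{w\in X}$ and $\{\lvert L_i^{-1}(z)\rvert\}_{z\in X}$ coincide; after sorting them nondecreasingly we obtain the same injectivity pattern for $L_i$ and $L_j$.

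The main point to get right is precisely this last step. Because left translations may fail to be injective one cannot simply appeal to cycle structure, and the argument must instead verify that conjugation by the bijection $g$ carries the fibres of $L_i$ onto those of $L_j$, thereby preserving the entire multiset of fibre sizes. Once the conjugation identities and this fibrewise observation are in place, both assertions of the lemma follow from transitivity with no further computation.
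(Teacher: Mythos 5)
Your proof is correct and follows essentially the same route as the paper: both obtain $g$ in the right multiplication group with $g(i)=j$ from transitivity and deduce $R_j=gR_ig^{-1}$, $L_j=gL_ig^{-1}$ using that $g$ is an automorphism. The only difference is that you spell out the concluding step the paper leaves as ``the result follows,'' in particular the fibrewise argument $L_j^{-1}(w)=g\bigl(L_i^{-1}(g^{-1}(w))\bigr)$ for the injectivity pattern, which is a welcome clarification rather than a new approach.
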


\begin{proof}
Since the right multiplication group of $(X,*)$ acts transitively on $X$, for any $i, j \in X$, there exists $f\in \langle R_i:i\in X\rangle$ such that $i=f(j)$. Then, because  $f$ is an automorphism, $$R_i=R_{f(j)}=fR_jf^{-1} \qquad \text{ since, for any $k\in X$, }\qquad  R_{f(j)}(k)=k * f(j) = f(f^{-1}(k)*j)=fR_jf^{-1}(k) .$$ Analogously, $$L_i=L_{f(j)}=fL_jf^{-1} \qquad \text{ since, for any $k\in X$, }\qquad L_{f(j)}(k)=f(j) * k = f(j * f^{-1}(k))=fL_jf^{-1}(k) .$$ The result follows.
\end{proof}

Therefore, we define the \emph{profile} of a connected quandle $(X,*)$ to be the cycle structure of any of its right translations and the \emph{injectivity pattern} of a connected quandle $(X,*)$ to be the injectivity pattern of any of its left translations. For instance, $Q_{9,4}$ has profile $(1,2,6)$ and injectivity pattern $(1,1,\dots,1)$.

The following theorem  (\cite{Brieskorn}, \cite{FennRourke}) provides an equivalent description of the structure of a quandle in terms of its right translations. Our work is based upon this description of quandles.

\begin{theorem}\label{thm:equivdef}
Let $X$ be a set of order $n$ and suppose a permutation $R_i\in S_n$ is assigned to each $i\in X$. Then the expression $j*i:= R_i(j), \forall  j \in X$, yields a quandle structure on $X$ if and only if $R_{ R_i(j)}=  R_i R_j R_i^{-1}$ and $R_i(i)=i$, $\forall i, j\in X$. This quandle structure is uniquely determined by the set of $n$ permutations.
\end{theorem}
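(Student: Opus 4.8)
The plan is to translate each of the three quandle axioms, one at a time, into an equivalent statement about the assigned permutations, using only the defining formula $j * i := R_i(j)$. Since this formula specifies $j*i$ for every pair, the operation---and hence the quandle structure, if there is one---is manifestly determined by the $n$ permutations, which settles the final uniqueness assertion immediately. It then remains to show that $(X,*)$ satisfies idempotency, right-invertibility and right self-distributivity if and only if both $R_i(i)=i$ and $R_{R_i(j)}=R_iR_jR_i^{-1}$ hold for all $i,j\in X$.

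First I would dispose of the two easy axioms. Right-invertibility asks that $x * j = i$ have a unique solution $x$ for each $i,j$; since $x*j=R_j(x)$ and $R_j$ is a bijection, the unique solution is $x=R_j^{-1}(i)$, so this axiom holds automatically and imposes no further condition. Idempotency $i*i=i$ reads $R_i(i)=i$ verbatim, which is precisely the fixed-point condition in the statement.

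The substance is the self-distributive axiom, which I would handle by expanding both sides in terms of the $R$'s. The left-hand side is
\[
(i * j) * k = R_k(i * j) = R_k R_j(i),
\]
while the right-hand side is
\[
(i * k) * (j * k) = R_{\,j * k}(i * k) = R_{R_k(j)} R_k(i).
\]
Thus right self-distributivity says $R_k R_j(i) = R_{R_k(j)} R_k(i)$ for all $i,j,k\in X$. The key step is to observe that, fixing $j$ and $k$ and letting $i$ range over all of $X$, this pointwise identity is equivalent to the equality of permutations $R_k R_j = R_{R_k(j)} R_k$; composing on the right with $R_k^{-1}$ yields $R_{R_k(j)}=R_k R_j R_k^{-1}$, which is exactly the conjugation condition after relabelling $k\mapsto i$. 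Since every one of these rewrites is an equivalence, running them in reverse establishes the converse as well, completing the biconditional.

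I do not expect a genuine obstacle here: the argument is a chain of definitional rewrites rather than a construction. The one point deserving care is the passage from the three-variable pointwise identity to the two-variable operator identity---legitimate precisely because the identity is demanded for every $i$---since this is what collapses a condition on triples to the conjugation relation on pairs, and is where a misplaced index or a spurious extra hypothesis could most easily slip in.
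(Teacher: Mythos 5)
Your proof is correct and follows the same route as the paper, which merely sketches this argument (idempotency $\leftrightarrow$ the fixed-point condition, right-invertibility automatic from bijectivity, self-distributivity $\leftrightarrow$ the conjugation relation) and defers the details to Brieskorn. Your expansion of both sides of the distributive law and the pointwise-to-operator passage is exactly the omitted computation, with the indices handled correctly.
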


\begin{proof}
The proof is straightforward. Let $i,j\in X$. Each $R_i$ is a permutation due to right-invertibility; idempotency implies $R_i(i)=i$; right self-distributivity implies $R_{ R_i(j)}=  R_i R_j R_i^{-1}$. The details can be found in \cite{Brieskorn}.
\end{proof}

In general, the profile of a finite connected quandle is of the form $(\ell_1,\dots,\ell_c)$, for a certain $c\in\mathbb{Z}^+$, where $1=\ell_1\leq\dots\leq \ell_c$. This is the format for the profile of a quandle that we will use below, allowing for repeats. Note that the profile of a finite connected quandle $(X,*)$ has to have at least one $1$, since $R_i(i)=i$, for each $i\in X$.

\begin{theorem}[cf. \cite{Lages_Lopes-latin}]\label{thm:latin}
Let $(X,*)$ be a quandle of order $n$ with profile $(\ell_1,\dots, \ell_c)$, where $1=\ell_1<\cdots<\ell_c$. Then $(X,*)$ is a latin quandle.
\end{theorem}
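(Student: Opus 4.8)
The plan is to prove that every left translation $L_a$ is a bijection, which is exactly the assertion that $(X,*)$ is latin. By Lemma \ref{lem:connected} all left translations share the same injectivity pattern, so it suffices to analyse a single $L_a$; and since $X$ is finite, it is enough to prove that $L_a$ is injective (equivalently surjective). Throughout I would use Theorem \ref{thm:equivdef} freely, treating the quandle as the system of its right translations.

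First I would record the one computation that drives everything: $L_a$ commutes with $R_a$. Indeed, for $j\in X$, right self-distributivity together with idempotency give
$$R_aL_a(j)=(a*j)*a=(a*a)*(j*a)=a*(j*a)=L_aR_a(j),$$
so $R_aL_a=L_aR_a$. From this, $L_a$ carries each $\langle R_a\rangle$-orbit onto a single $\langle R_a\rangle$-orbit: writing $\ell(x)$ for the length of the $R_a$-cycle through $x$, the identity $L_aR_a^{\ell(x)}=R_a^{\ell(x)}L_a$ forces $R_a^{\ell(x)}$ to fix $L_a(x)$, whence $\ell(L_a(x))\mid\ell(x)$. I would also note that $L_a$ has a \emph{unique} fixed point: $a*x=x$ means $R_x(a)=R_x(x)$, so $a=x$.

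Next I would bring in the distinctness hypothesis $1=\ell_1<\cdots<\ell_c$ through the centraliser of $R_a$ in the symmetric group on $X$. Because $R_a$ has pairwise distinct cycle lengths, its centraliser consists precisely of the permutations that fix each cycle setwise and act on it as a power of that cycle; no two cycles may be interchanged. Consequently, once $L_a$ is known to be a bijection it automatically lies in this centraliser, preserves every $R_a$-cycle, and acts as a power of the cycle on each (with the power nontrivial on cycles of length $>1$, by the unique–fixed–point property). Thus the theorem reduces to upgrading $\ell(L_a(x))\mid\ell(x)$ to an \emph{equality} for every $x$: equivalently, to showing that the induced map $\phi$ on the set of cycles, sending the cycle of $x$ to the cycle of $L_a(x)$ and satisfying $\ell(\phi(\mathcal O))\mid\ell(\mathcal O)$, is the identity. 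Indeed, if $\phi=\mathrm{id}$ then $L_a$ is a length-preserving surjection on each cycle, hence bijective there, and distinct cycles (having distinct lengths) map to distinct cycles, so $L_a$ is a bijection.

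The main obstacle is precisely this last step, and it is where the hypothesis must be used in an essential way: mere commutation and orbit-counting are entirely consistent with a longer cycle collapsing onto a shorter one of divisor length. I would attack it from the longest cycle downward. Since the lengths are distinct, the longest cycle $\mathcal O_c$ can be the $L_a$-image of at most $\mathcal O_c$ itself, so either $\phi(\mathcal O_c)=\mathcal O_c$ or $\mathcal O_c$ lies entirely outside $\operatorname{Im}L_a$; the crux is to exclude the second alternative. Here I would exploit connectivity: since each $R_b$ is an automorphism, $L_{a*b}=R_bL_aR_b^{-1}$, so all the maps $L_b$ are simultaneously conjugate and the longest cycle of $R_b$ is $R_b(\mathcal O_c)$. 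Hence a hypothetical shedding of the longest cycle would occur at \emph{every} element of $X$, and I would aim to contradict the resulting uniform bound $\lvert\operatorname{Im}L_b\rvert\le n-\ell_c$ by a double count of the incidences $\{(b,z):z\in\operatorname{Im}L_b\}$ (whose fibres are forced to be constant along $R_b$-orbits), balanced against the fact that each $b$ is the unique fixed point lying in its own image. Granting that $\mathcal O_c$ is preserved, the same analysis applied inside $X\setminus\mathcal O_c$ — whose surviving cycles again have pairwise distinct lengths — would give $\phi(\mathcal O_{c-1})=\mathcal O_{c-1}$, and descending induction would force $\phi=\mathrm{id}$. Then $L_a$ is a bijection and, by Lemma \ref{lem:connected}, $(X,*)$ is latin.
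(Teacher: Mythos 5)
Your preliminary reductions are all correct and easily verified: $R_aL_a=L_aR_a$ does follow from idempotency plus right self-distributivity; commutation gives $L_a(R_a^m x)=R_a^m L_a(x)$, so $L_a$ carries each $R_a$-cycle onto a \emph{full} $R_a$-cycle whose length divides the original; $a$ is the unique fixed point of $L_a$; only $\mathcal{O}_c$ can be carried onto $\mathcal{O}_c$; the identity $L_{R_b(a)}=R_bL_aR_b^{-1}$ together with connectedness (Lemma \ref{lem:connected}) propagates any ``shedding'' of a cycle from one left translation to all of them; and if the induced map $\phi$ on cycles is the identity then $L_a$ is a bijection (indeed a bijective $\phi$ with $\ell(\phi(\mathcal{O}))\mid\ell(\mathcal{O})$ and pairwise distinct lengths must be the identity, by summing lengths, so $\phi\neq\mathrm{id}$ forces some cycle to lie wholly outside $\operatorname{Im}L_a$). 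So you have correctly reduced the theorem to: \emph{no} cycle of $R_b$ can be disjoint from $\operatorname{Im}L_b$ for all $b$.

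That exclusion is the entire theorem, and it is exactly where you stop proving and start hoping; the proposed double count cannot close it. Counting the incidences $\{(b,z):z\in\operatorname{Im}L_b\}$ by the second coordinate, the set $\{b: z\in\operatorname{Im}L_b\}=\{R_x^{-1}(z):x\in X\}$ is the image of the left translation at $z$ of the \emph{dual} quandle $(X,\tilde{*})$, $i\,\tilde{*}\,j:=R_j^{-1}(i)$, which has the same profile with the same distinct lengths (inverse permutations have the same cycle type). Hence lower-bounding the column sums by more than $n(n-\ell_c)$ — which is what contradicting the uniform bound $\lvert\operatorname{Im}L_b\rvert\leq n-\ell_c$ requires — amounts to the latin property (or something of the same strength) for the dual quandle, i.e., the statement being proved; the argument is circular. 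The side facts you propose to balance against are far too weak: fibres of $L_b$ constant along $R_b$-orbits is true (since $L_b^{-1}(R_b(z))=R_b(L_b^{-1}(z))$) but gives no lower bound, and $b\in\operatorname{Im}L_b$ gives only $N\geq n$. Moreover, your descending induction needs this same missing lemma afresh at every level ($\mathcal{O}_{c-1}$, $\mathcal{O}_{c-2}$, \dots), since the shed cycle need not be the longest one. Note finally that the paper itself contains no internal proof of Theorem \ref{thm:latin} — it defers entirely to \cite{Lages_Lopes-latin}, where the result is established by a substantially more involved argument; your sketch reproduces the natural opening moves (commutation, divisibility of cycle lengths, unique fixed point, conjugacy of the $L_b$'s) but the core of the proof is absent.
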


\begin{proof}
See \cite{Lages_Lopes-latin}.
\end{proof}

%A quandle $(X,*)$ in the conditions of Theorem \ref{thm:latin} is said to be an \emph{heterogeneous quandle}.
We now present  Hayashi's Conjecture on the profile of finite connected quandles (\cite{Hayashi}).

\begin{conjecture}[Hayashi's Conjecture]\label{cnj:Hayashi}
Let $(X,*)$ be a connected quandle of order $n$ with profile $(\ell_1,\dots,\ell_c)$, where $1=\ell_1\leq\cdots\leq \ell_c$. Then $\ell_c$ is a multiple of $\ell_i$ for every $i\in\{1,\dots,c\}$.
\end{conjecture}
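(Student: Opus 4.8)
The plan is to prove the statement in the case the abstract announces, namely profiles involving at most five distinct cycle lengths. Write the distinct values occurring in the profile as $1=d_1<d_2<\cdots<d_k$ with $k\le 5$, let $m_t$ be the number of $d_t$-cycles of a right translation, and fix once and for all a right translation $R=R_a$. The goal ``$d_t$ divides $d_k$ for every $t$'' is equivalent to the clean statement that $R$ possesses a cycle whose length equals $\operatorname{ord}(R)=\operatorname{lcm}(d_1,\dots,d_k)$; that is, the longest cycle length is the order of $R$. My first move is therefore to reformulate the target as ``$R^{d_k}=\operatorname{id}$'' and to extract as much structure as possible from the conjugation identity $R_{R_a(b)}=R_aR_bR_a^{-1}$ of Theorem \ref{thm:equivdef}.

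Two consequences of that identity drive everything. First, if $b$ lies in an $R$-cycle of length $\ell$ then $R^{\ell}$ commutes with $R_b$, and more generally $R^{j}$ conjugates $R_b$ to $R_{R^{j}(b)}$, so the right translations attached to the points of a single $R$-cycle are all conjugate by powers of $R$. Second, because each right translation is an automorphism (Lemma \ref{lem:connected}), the fixed-point set $\operatorname{Fix}(R^{m})$ is a subquandle of $(X,*)$ for every $m$, and its cardinality is the arithmetic function $\sum_{d_t\mid m} m_t d_t$. These furnish a supply of subquandles whose sizes read off exactly which lengths divide $m$, and they let me pass from $(X,*)$ to smaller pieces in which fewer lengths survive.

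The engine of the argument is a rectangle lemma that I would prove next: given two cycle lengths $p$ and $q$ of $R$, choose representatives in the two cycles and use a second translation together with $\langle R\rangle$ to lay out the orbit of a point as a combinatorial grid; transitivity of the right multiplication group (connectedness) transports this configuration between cycles so that the two lengths become genuinely comparable, and closing up the grid forces a divisibility relation between $p$ and $q$. Accumulating these pairwise relations across the at most five lengths, and feeding them into the subquandle size identities above, should pin down the divisibility pattern. When all multiplicities $m_t$ equal $1$ the lengths are strictly increasing, so Theorem \ref{thm:latin} applies and the quandle is latin, which I would exploit as a separate, more rigid sub-case.

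I would finish by induction on $k$, the number of distinct lengths. The cases $k\le 2$ are immediate since $d_1=1$. For $k\in\{3,4,5\}$ I would combine the accumulated pairwise divisibilities with the subquandle size identities, passing to a subquandle $\operatorname{Fix}(R^{m})$ for an $m$ chosen to discard at least one length so that induction applies, and treating the relations among the two or three largest lengths directly from the rectangle lemma. The main obstacle I anticipate is twofold. First, proving the rectangle lemma with a conclusion strong enough to yield outright divisibility rather than a weaker inequality between $\gcd$ and $\operatorname{lcm}$, and guaranteeing that the auxiliary fixed-point subquandles are connected (or rephrasing the reduction so that connectedness is not needed and each reduction genuinely lowers $k$). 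Second, the combinatorial casework at $k=4$ and $k=5$, where one must exclude every configuration in which some length fails to divide $d_k$, and where the number of patterns to rule out grows rapidly.
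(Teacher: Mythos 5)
The engine of your argument fails at its core: the ``rectangle lemma'' you propose, concluding an outright divisibility relation between two arbitrary cycle lengths $p$ and $q$ of $R$, is not merely hard to prove with a strong enough conclusion --- it is false. Hayashi's conjecture asserts only that every length divides the \emph{longest} one, not that the lengths are pairwise comparable, and connected quandles with incomparable lengths exist; the paper itself exhibits $Q_{12,4}$ with profile $(1,2,3,6)$, where $2\nmid 3$ and $3\nmid 2$. The true statements in this direction are strictly weaker: Proposition \ref{prop:main} (if some product $i_t*i_u$ with $i_t\in C_t$, $i_u\in C_u$ lands in $C_v$, then $\ell_v\mid[\ell_t,\ell_u]$, a constraint on \emph{where block products land}, not on the lengths themselves) and Theorem \ref{thm:generalization} (for any splitting $P\cup Q=L$ of the set of lengths, $\operatorname{lcm} P$ and $\operatorname{lcm} Q$ are comparable under divisibility). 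Note that the latter is obtained from exactly your fixed-point subquandles $\{y: R_x^m(y)=y\}$ via Lemmas \ref{lem:reh1}--\ref{lem:reh3}, but it is used rigidly --- $X$ must \emph{equal} one of the two fixed subquandles --- and it is consistent with $(1,2,3,6)$ (take $P=\{1,2\}$, $Q=\{3,6\}$). Once your rectangle lemma is weakened to these true statements, the plan ``accumulate pairwise divisibilities and induct'' collapses, because the residual hard cases are precisely those where all the relevant non-divisibilities hold simultaneously and no pairwise relation is available.

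Your proposed induction on the number of distinct lengths via passing to $\operatorname{Fix}(R^m)$ also founders on the connectedness issue you yourself flag: every piece of profile and injectivity-pattern machinery (Lemma \ref{lem:connected} onward) requires connectedness, and a proper fixed subquandle of a connected quandle need not be connected, so each ``reduction'' loses the hypotheses. The paper never inducts. It proves Lemma \ref{lem:summary} to reduce to the sign patterns of $1=\ell_1\leq\cdots\leq\ell_c$ with at least three strict inequalities (note also that your restatement in terms of at most five \emph{distinct} lengths is a different and stronger claim than Theorem \ref{thm:theone}, where $c$ counts lengths with multiplicity --- and the multiplicity structure matters, e.g.\ in Proposition \ref{prop:2equals} and Case 1 of $c=5$), and then rules out each remaining pattern by cycle-quandle-table bookkeeping: Corollaries \ref{cor:main}, \ref{cor:content}, \ref{cor:latin} and \ref{cor:byone} confine each $\mathcal{R}_{t,u}$ to specific blocks, after which cardinality and injectivity-pattern counts ($\lvert C_j\rvert<\lvert C_5\rvert$, at most two preimages under a left translation, and in Case 1 the classification of connected quandles with profile $(1,\dots,1,\ell)$ from \cite{Lages_Lopes}) yield contradictions. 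Your one solid overlap with the paper is the use of Theorem \ref{thm:latin} when the profile is strictly increasing, which indeed mirrors the $c=4$ argument and Case 5 of $c=5$, but on its own it does not replace the missing pairwise engine.
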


Hayashi's Conjecture is trivial for $c=2$. Quandles with $c=2$ are called quandles of cyclic type (\cite{Lopes_Roseman}). Watanabe (\cite{Watanabe}) verified Hayashi's Conjecture both for $c=3$ and $n\leq47$. We check it for $c\in\{3,4,5\}$  using the description of quandles in terms of their right translations. Specifically, we prove the Main Theorem (Theorem \ref{thm:theone}).

\begin{theorem}[Main Theorem]\label{thm:theone}
Hayashi's Conjecture is true for $c\in\{3,4,5\}$.
\end{theorem}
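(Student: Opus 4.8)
The plan is to recast Hayashi's Conjecture as a statement about the order of a right translation and then to eliminate, by a finite classification, every profile with at most five lengths that could violate it. Since the order of a permutation is the least common multiple of its cycle lengths, the conclusion ``$\ell_c$ is a multiple of each $\ell_i$'' is equivalent to $\mathrm{lcm}(\ell_1,\dots,\ell_c)=\ell_c$, i.e.\ to $\operatorname{ord}(R_i)=\ell_c$, i.e.\ to $R_i^{\ell_c}=\mathrm{id}$. Thus a counterexample with $c\in\{3,4,5\}$ would be a connected quandle whose profile satisfies $\mathrm{lcm}(\ell_1,\dots,\ell_c)>\ell_c$. There are only finitely many divisibility patterns that at most five lengths can exhibit, so it suffices to show that no such pattern (each possibly an infinite family of value-assignments, to be treated uniformly) is the profile of a finite connected quandle. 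First I would record the easy necessary conditions: the lengths sum to $n=\lvert X\rvert$, the value $1$ must occur (the fixed point of each $R_i$), and if all lengths are distinct the quandle is latin by Theorem \ref{thm:latin}, giving extra structure to exploit.

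The working description is the one of Theorem \ref{thm:equivdef}: a transitive family $\{R_i\}_{i\in X}$ of permutations, all of the same cycle type (Lemma \ref{lem:connected}), subject to $R_{R_i(j)}=R_iR_jR_i^{-1}$ and $R_i(i)=i$. From this I would extract two levels of constraint. The coarse one is a centralizer fact: iterating the defining relation along an $R_i$-orbit gives $R_{R_i^k(x)}=R_i^{k}R_xR_i^{-k}$, so if $x$ lies on an $a$-cycle of $R_i$ then $R_i^{a}$ commutes with $R_x$; consequently $R_x$ preserves the fixed-point set of $R_i^{a}$ and its complement, forcing the cycles of $R_x$ to be distributed across these invariant blocks in a way dictated by the available lengths. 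This pins down, up to relabelling, the gross location of every cycle. The fine constraint is the rectangle lemma (Lemma \ref{lem:rectangles}): self-distributivity, read off four suitably chosen cells of the quandle table, forces further entries, and propagating these forced entries once around a cycle yields the divisibility relations that the coarse argument cannot. Indeed, for a three-length profile $(1,a,b)$ the centralizer fact alone is consistent with both $a\mid b$ and $a\nmid b$, so the rectangle lemma is doing the essential work.

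With these tools I would run the analysis in increasing $c$. For $c=3$ the profile is $(1,a,b)$ with $a\le b$ and one reduces to $a\mid b$, recovering Watanabe's case. For $c=4$ and $c=5$ one parametrises by the set of distinct lengths and their multiplicities, uses the sum condition and the latin reduction to cut down the candidates, and for each remaining pattern with $\mathrm{lcm}>\max$ combines the centralizer placement with the rectangle propagation to reach a contradiction. Throughout, transitivity lets me fix a convenient base point and the conjugacy of the $R_i$ lets me transport any local configuration to any cycle, which keeps the number of genuinely distinct subcases manageable.

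The hard part will be the fine combinatorial step for the largest $c$. As the three-length analysis already shows, the invariant-block structure coming from the centralizer fact is never by itself enough to produce a divisibility, so the burden falls on the rectangle lemma; and at $c=5$ one must control the interaction of three non-trivial lengths at once, carrying the forced-entry propagation around full cycles while tracking which block each forced value lands in, as the number of patterns with $\mathrm{lcm}>\max$ grows. Organising this bookkeeping so that every bad pattern is killed by a uniform rectangle argument, rather than by ad hoc computation case by case, is the crux of the proof.
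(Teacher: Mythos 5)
There is a genuine gap, and it sits exactly where you locate the crux of your own plan. Your entire mechanism for producing divisibility relations is the ``rectangle lemma'' you cite as Lemma lem:rectangles, but no such lemma is ever stated, let alone proved, in your proposal (nor does it exist in the paper: the preamble defines a theorem environment by that name, but the lemma itself appears nowhere). You concede that your coarse tool --- the commutation $R_i^aR_x=R_xR_i^a$ for $x$ on an $a$-cycle of $R_i$, and the resulting invariance of the fixed-point set of $R_i^a$ under $R_x$ --- is consistent with both $a\mid b$ and $a\nmid b$ even for the profile $(1,a,b)$, so as written your argument proves nothing, not even the $c=3$ case. The paper's coarse tool is strictly stronger and does yield divisibilities on its own: the fixed sets $\{y\in X\mid R_x^p(y)=y\}$ are subquandles (Lemma \ref{lem:reh3}), a finite connected quandle is never the union of two proper subquandles (Lemmas \ref{lem:reh1} and \ref{lem:reh2}), and hence for any splitting $P\cup Q$ of the set of lengths either $\mathrm{lcm}\,P\mid\mathrm{lcm}\,Q$ or $\mathrm{lcm}\,Q\mid\mathrm{lcm}\,P$ (Theorem \ref{thm:generalization}). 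This dichotomy settles $c=3$ in one line, reduces $c=4$ to the single strict chain $1<\ell_2<\ell_3<\ell_4$, and reduces $c=5$ to five cases; your block-invariance observation does not imply it, and your plan has no substitute before the ``fine'' stage even begins.

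Even granting such a reduction, two of the surviving $c=5$ configurations lie outside your toolkit. For $1=\ell_1=\ell_2<\ell_3<\ell_4<\ell_5$ the quandle cannot be latin (in a latin quandle each right translation has exactly one fixed point, since $x*i=x=x*x$ forces $i=x$), so the latin reduction via Theorem \ref{thm:latin} that you lean on is unavailable; the paper instead shows $(C_{1,2,i},*)$ is a subquandle for each $i\in\{3,4,5\}$ and imports the classification of connected quandles with profile $(1,\dots,1,\ell)$ from \cite{Lages_Lopes}, forcing $\ell_i\in\{2,4\}$ for three pairwise distinct lengths --- a contradiction no rectangle-style propagation is shown to reproduce. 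For the equal-middle-length cases ($\ell_2=\ell_3$ or $\ell_3=\ell_4$ or $\ell_4=\ell_5$), the paper's argument is a quantitative bound on the injectivity pattern of left translations (at most $2$, Proposition \ref{prop:2equals}), combined with the pre-image count of Corollary \ref{cor:byone}, to exhibit a forbidden triple pre-image of some $L_i$ and then a cardinality clash such as $|C_{2,3}|\geq 2|C_4|$ (Corollary \ref{cor:5with2equals}); nothing in your sketch --- the sum condition, the centralizer blocks, or the unstated rectangle propagation --- tracks pre-image multiplicities of left translations, which is what actually kills these cases. So the proposal is a programme whose two load-bearing components (the dichotomy of Theorem \ref{thm:generalization}, or an equivalent, and the non-latin/repeated-length machinery) are missing, with the hard work deferred to a lemma that is never supplied.
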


%The proof of Theorem \ref{thm:theone} is found in Section \ref{sctn:hayashi}.

The article is organized as follows. In Section \ref{sctn:aux} we elaborate further on quandles to pave the way to the proof of the Main Theorem. In Section \ref{sctn:hayashi}, we prove the Main Theorem. In Section \ref{sctn:further} we make some final remarks.
\section{Finite connected quandles}\label{sctn:aux}

In this section we elaborate further on finite connected quandles. Theorem \ref{thm:generalization} generalizes an obstruction on the profile of finite connected quandles due to Rehman (\cite{Rehman}). Lemmas \ref{lem:reh1}, \ref{lem:reh2}, \ref{lem:reh3} can be found in \cite{Rehman}. We state and prove them here for the reader's convenience.

\begin{lemma}\label{lem:reh1}
Let $(X,*)$ be a finite connected quandle, let $Y\subsetneq X$ be a subquandle of $X$ and let $Y^c:=X\setminus Y$. Then $X=\{((\cdots(y_n*y_{n-1})*\cdots)*y_2)*y_1\mid y_1,\dots,y_n\in Y^c, n\geq1\}$
\end{lemma}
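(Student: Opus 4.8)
The plan is to let $S$ denote the set on the right-hand side, and to show that $S$ is nonempty and invariant under every right translation $R_x$, $x\in X$; connectedness will then force $S=X$. Taking $n=1$ gives $Y^c\subseteq S$, so $S\neq\emptyset$ because $Y\subsetneq X$ guarantees $Y^c\neq\emptyset$. It is convenient to first rewrite a typical element of $S$ in terms of right translations, namely $((\cdots(y_n*y_{n-1})*\cdots)*y_1)=R_{y_1}R_{y_2}\cdots R_{y_{n-1}}(y_n)$, which exhibits $S$ as the set swept out by applying products of right translations indexed by $Y^c$ to elements of $Y^c$.

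The core of the argument is to establish $R_x(S)\subseteq S$ for every $x\in X$, and I would split this into two cases. If $x\in Y^c$, then for $s\in S$ the element $s*x=R_x(s)$ is again a left-nested product of elements of $Y^c$, now with one extra factor, so $s*x\in S$ immediately from the definition. If $x\in Y$, I would first note that, since $Y$ is a subquandle, $R_x$ maps $Y$ into $Y$; being a bijection of the finite set $X$, it must then map $Y^c$ onto $Y^c$. A single induction on $n$ using right self-distributivity then gives $s*x=((\cdots((y_n*x)*(y_{n-1}*x))*\cdots)*(y_1*x))$, a left-nested product of the elements $y_k*x=R_x(y_k)$, each of which now lies in $Y^c$; hence $s*x\in S$ once more.

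Finally, since $X$ is finite and each $R_x$ is a bijection satisfying $R_x(S)\subseteq S$, we get $R_x(S)=S$, so $S$ is invariant under the entire right multiplication group $\langle R_x:x\in X\rangle$. Because $(X,*)$ is connected, this group acts transitively on $X$, and a nonempty invariant subset of a transitive action must be all of $X$; therefore $S=X$, as claimed.

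The one genuinely load-bearing step, and the part I expect to be the main obstacle, is the case $x\in Y$: the direct ``prepend a factor'' argument fails there because $R_x$ need not keep the distributed factors inside $Y^c$. What rescues it is precisely that a subquandle is stable under its own right translations, so $R_x$ permutes $Y^c$, keeping the self-distributed product inside $S$. Identifying and exploiting this stability is the crux; the remaining ingredients (the reformulation via $R_x$, the self-distributivity induction, and the transitivity conclusion) are routine.
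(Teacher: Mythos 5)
Your proof is correct and takes essentially the same approach as the paper's: both show that the set $S$ (the paper's $Z$) is stable under right translation by every element of $X$ --- trivially for $x\in Y^c$, and via right self-distributivity combined with $R_x(Y^c)=Y^c$ for $x\in Y$ --- and then invoke connectedness to conclude $S=X$. The only cosmetic difference is that you spell out the finiteness/bijectivity argument behind $R_x(Y^c)=Y^c$, which the paper leaves implicit in its step $Y^c*Y\subseteq Y^c$.
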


\begin{proof}
For $W, W'\subseteq X$, we let $W*W'=\{ w*w'\,|\, w\in W, w'\in W'\}$.
First, we note that $Y*Y\subseteq Y$ (since $Y$ is a subquandle) so we have that $Y^c*Y\subseteq Y^c$. Let $Z:=\{((\cdots(y_n*y_{n-1})*\cdots)*y_2)*y_1\mid y_1,\dots,y_n\in Y^c, n\geq1\}$. We prove that $Z*X\subseteq Z$. On one hand, $Z*Y^c\subseteq Z$ by definition. On the other hand, given $y\in Y$ and $((\cdots(y_n*y_{n-1})*\cdots)*y_2)*y_1\in Z$, we have that \[((\cdots(y_n*y_{n-1})*\cdots*y_2)*y_1)*y=((\cdots((y_n*y)*(y_{n-1}*y))*\cdots)*(y_2*y))*(y_1*y).\] As $Y^c*Y\subseteq Y^c$, we have that $Z*Y\subseteq Z$, so $Z*X\subseteq Z$. Since $X$ is connected, we conclude that $X=Z$.
\end{proof}

\begin{lemma}\label{lem:reh2}
Let $(X,*)$ be a finite connected quandle and let $Y,Z\subseteq X$ be two subquandles of $X$ such that $X=Y\cup Z$. Then either $X=Y$ or $X=Z$.
\end{lemma}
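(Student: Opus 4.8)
The plan is to argue by cases on whether $Y$ is all of $X$, and to reduce the nontrivial case to Lemma \ref{lem:reh1}. If $X=Y$ there is nothing to prove, so I would assume $Y\subsetneq X$ is a proper subquandle and aim to deduce $X=Z$. The one structural fact I would extract from the hypothesis $X=Y\cup Z$ is the inclusion $Y^c\subseteq Z$: indeed, any $x\notin Y$ must lie in $Z$ since $x\in Y\cup Z$.

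The heart of the argument is to apply Lemma \ref{lem:reh1} to the proper subquandle $Y$. This gives the description
\[
X=\{((\cdots(y_n*y_{n-1})*\cdots)*y_2)*y_1\mid y_1,\dots,y_n\in Y^c,\ n\geq1\}.
\]
Now I would combine this with two observations. First, every $y_i$ appearing in such an expression lies in $Y^c\subseteq Z$. Second, $Z$ is a subquandle, hence closed under $*$, i.e. $Z*Z\subseteq Z$. Reading the iterated product from the inside out, each successive application of $*$ keeps the partial result inside $Z$: the base case $n=1$ gives $y_1\in Y^c\subseteq Z$, and each further right multiplication by some $y_k\in Z$ preserves membership in $Z$. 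Therefore every element of the displayed set lies in $Z$, which yields $X\subseteq Z$, and hence $X=Z$.

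I expect no serious obstacle here, as the statement follows almost immediately once Lemma \ref{lem:reh1} is in hand; the only point requiring a moment of care is making sure the hypotheses of Lemma \ref{lem:reh1} are met, namely that $Y$ is a \emph{proper} subquandle, which is exactly what the case split guarantees. The conceptual content is simply that connectivity forces $X$ to be generated (under iterated right multiplication) by the complement of any proper subquandle, and that this complement is swallowed by the other subquandle $Z$, whose closure under $*$ then captures all of $X$.
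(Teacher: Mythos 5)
Your proposal is correct and follows essentially the same route as the paper: both apply Lemma \ref{lem:reh1} to the proper subquandle $Y$ and then observe that $Y^c\subseteq Z$ together with the closure of $Z$ under $*$ forces $X\subseteq Z$. Your version merely makes explicit the induction on the iterated products that the paper's one-line conclusion leaves implicit.
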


\begin{proof}
If $X\neq Y$, then, by Lemma \ref{lem:reh1}, $X=\{((\cdots(y_n*y_{n-1})*\cdots)*y_2)*y_1\mid n\geq1,y_1,\dots,y_n\in Y^c\}$, where $Y^c=X\setminus Y$. Since $Z$ is a subquandle of $X$ such that $Z\supseteq Y^c$, we conclude that $X=Z$.
\end{proof}

%\begin{lemma}\labellem:rule}
%Let $i,j,k\in(X,*)$. Then $R_k^{\,m}(i)* R_k^{\,m}(j)= R_k^{\,m}(i*j)$, for each $m\in\mathbb{Z}^+$.
%\end{lemma}
%\begin{proof}
%By induction on $m$. $R_k(i*j)=(i*j)*k=(i*k)*(j*k)=R_k(i)*R_k(j)$. Suppose $R_k^m(i*j)=R_k^m(i)*R_k^m(j)$. Then, applying %$R_k$ to both sides, $R_k^{m+1}(i*j)=R_k^{m+1}(i)*R_k^{m+1}(j)$.
%\end{proof}

\begin{lemma}\label{lem:reh3}
Let $(X,*)$ be a finite connected quandle, let $x\in X$ and $p\in\mathbb{Z}$. Then $Y=\{y\in X\mid R_x^p(y)=y\}$ is a subquandle of $X$.
\end{lemma}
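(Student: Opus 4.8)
The plan is to recognize $Y$ as the fixed-point set of the map $\phi:=R_x^p$ and to exploit the fact that $\phi$ is an automorphism of $(X,\ast)$. First I would record that each right translation is an automorphism: reading right self-distributivity as
\[
R_k(a\ast b)=(a\ast b)\ast k=(a\ast k)\ast(b\ast k)=R_k(a)\ast R_k(b),
\]
we see that $R_k$ respects $\ast$, and $R_k$ is a bijection by right-invertibility (Theorem \ref{thm:equivdef}). Since the automorphisms of $(X,\ast)$ form a group, $\phi=R_x^p$ is again an automorphism for every $p\in\mathbb{Z}$; note this includes negative $p$, as $R_x$ is invertible.

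Next I would verify closure of $Y$ under $\ast$ directly from the homomorphism property. If $y_1,y_2\in Y$, then $\phi(y_1\ast y_2)=\phi(y_1)\ast\phi(y_2)=y_1\ast y_2$, so $y_1\ast y_2\in Y$; hence $Y\ast Y\subseteq Y$. The set $Y$ is nonempty because idempotency gives $R_x(x)=x$, so $\phi(x)=R_x^p(x)=x$ and thus $x\in Y$.

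It then remains to check the right-invertibility clause inside $Y$, namely that for $a,b\in Y$ the unique $z\in X$ with $z\ast b=a$ already lies in $Y$. Here I would invoke finiteness: since $Y\ast Y\subseteq Y$, each $R_b$ with $b\in Y$ maps $Y$ into $Y$, and being the restriction of a permutation of $X$ it is injective on $Y$; as $Y$ is finite, $R_b|_Y$ is a bijection of $Y$, so the required preimage $R_b^{-1}(a)$ lies in $Y$. Therefore $Y$ inherits all three quandle axioms and is a subquandle.

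I do not anticipate a serious obstacle, as the substance is just the standard fact that the fixed-point set of an automorphism is a subalgebra, combined with the observation that on a finite set closure under $\ast$ already forces closure under right division. The only point requiring a little care is not to overlook the right-invertibility part of the subquandle definition, which is precisely what the finiteness argument in the last paragraph supplies.
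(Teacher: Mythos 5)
Your proof is correct, and its closure step coincides with the paper's: both recognize $Y$ as the fixed-point set of the automorphism $R_x^p$, so that $R_x^p(y\ast y')=R_x^p(y)\ast R_x^p(y')=y\ast y'$ gives $Y\ast Y\subseteq Y$. Where you genuinely diverge is the right-invertibility step. The paper transports uniqueness through the automorphism: for $a,b\in Y$, the unique $u\in X$ with $u\ast b=a$ satisfies
\[
a=R_x^p(a)=R_x^p(u\ast b)=R_x^p(u)\ast R_x^p(b)=R_x^p(u)\ast b,
\]
and uniqueness of the solution in $X$ forces $R_x^p(u)=u$, i.e.\ $u\in Y$. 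You instead argue by pigeonhole: closure means $R_b$ maps $Y$ into $Y$ for $b\in Y$, injectively because $R_b$ is a permutation of $X$, hence bijectively because $Y$ is finite, so the required preimage lies in $Y$. Both arguments are sound. The paper's version buys generality at no cost: it never uses finiteness, so it shows the fixed-point set of any quandle automorphism is a subquandle, finite or not; your version genuinely needs $\lvert Y\rvert<\infty$ (covered by the hypotheses here), but it illustrates the broader principle that in a finite structure closure under $\ast$ already forces closure under right division, an observation that would apply to any $\ast$-closed subset, not just fixed-point sets. Neither proof uses connectedness. Your added checks (nonemptiness of $Y$ via $x\in Y$, and the remark that negative powers $R_x^p$ are still automorphisms) are harmless refinements the paper leaves implicit.
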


\begin{proof}
Noting that composition of automorphisms is an automorphism (we will use this remark again without mentioning it) $R_x^p(y*y')=R_x^p(y)*R_x^p(y')=y*y'$, for every $y,y'\in Y$. Then, $Y=\{y\in X\mid R_x^p(y)=y\}$ is  closed under the $*$ operation. Moreover, let $a,b\in Y$. Then, there exists a unique $u\in X$ such that $u*b=a$. Then, $$a=R_x^p(a)=R_x^p(u*b)=R_x^p(u)*R_x^p(b)=R_x^p(u)*b ,$$which implies $u=R_x^p(u)$ by uniqueness and therefore right-invertibility holds over $Y$. $Y$ is a subquandle of $X$.
\end{proof}

\begin{definition}
Let $L$ be a finite sequence of elements from $\mathbf{Z}^+$. We let
\[
\mathrm{lcm}\,\, L = \begin{cases} 1, & \text{if $L=\emptyset$}\\
                             \text{least common multiple of elements in $L$}, & \text{otherwise}
                \end{cases}
\]
\end{definition}

Theorem \ref{thm:generalization} is a generalization of Proposition 3.5 in \cite{Rehman}.

\begin{theorem}\label{thm:generalization}
Let $(X,*)$ be a connected quandle with profile $(\ell_1,\dots,\ell_c)$, let $L=\{\ell_i\mid 1\leq i\leq c\}$, let $P=\{p_1,\dots,p_j\}\subseteq L$ and $Q=\{q_1,\dots,q_k\}\subseteq L$  such that $P\cup Q=L$ and let $p=\mathrm{lcm}(p_1,\dots,p_j)$ and $q=\mathrm{lcm}(q_1,\dots,q_k)$. Then either $p\mid q$ or $q\mid p$.
\end{theorem}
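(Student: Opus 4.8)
The plan is to realize the two integers $p$ and $q$ as invariants of two fixed-point subquandles associated with a single right translation $R_x$, and then to let the covering lemma, Lemma \ref{lem:reh2}, force the dichotomy. Since $(X,*)$ is connected, by Lemma \ref{lem:connected} every right translation has the cycle structure recorded by the profile, so I may fix an arbitrary $x\in X$ and work solely with $R_x$, whose disjoint-cycle lengths are precisely the elements of $L$ (counted with multiplicity).

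First I would set $Y_p:=\{y\in X\mid R_x^{\,p}(y)=y\}$ and $Y_q:=\{y\in X\mid R_x^{\,q}(y)=y\}$. By Lemma \ref{lem:reh3}, both $Y_p$ and $Y_q$ are subquandles of $X$. The crucial elementary observation is that a point $y$ lying in an $\ell$-cycle of $R_x$ satisfies $R_x^{\,m}(y)=y$ if and only if $\ell\mid m$; hence $Y_p$ is exactly the union of those cycles of $R_x$ whose length divides $p$, and similarly for $Y_q$.

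Next I would show $X=Y_p\cup Y_q$. Every $y\in X$ lies in a cycle of $R_x$ whose length $\ell$ belongs to $L=P\cup Q$. If $\ell\in P$ then $\ell\mid\mathrm{lcm}(P)=p$, so $y\in Y_p$; if $\ell\in Q$ then $\ell\mid q$, so $y\in Y_q$. Thus every point of $X$ lies in $Y_p$ or in $Y_q$, giving $X=Y_p\cup Y_q$. Applying Lemma \ref{lem:reh2} then forces $X=Y_p$ or $X=Y_q$.

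Finally I would translate this dichotomy back into the divisibility statement. If $X=Y_p$, then $R_x^{\,p}$ is the identity, so every cycle length of $R_x$—in particular every $q_i\in Q\subseteq L$—divides $p$; therefore $q=\mathrm{lcm}(q_1,\dots,q_k)\mid p$. Symmetrically, $X=Y_q$ yields $p\mid q$. The only real content is the two elementary bridges, namely ``fixed by $R_x^{\,m}$ iff the cycle length divides $m$'' and ``$\ell\in P\Rightarrow\ell\mid p$'', together with the packaging that lets Lemma \ref{lem:reh2} do the work; once $X=Y_p\cup Y_q$ is established the conclusion is immediate, so I do not anticipate a genuine obstacle beyond correctly identifying these fixed-point subquandles.
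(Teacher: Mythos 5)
Your proposal is correct and follows essentially the same route as the paper: fix $x\in X$, form the fixed-point sets of $R_x^{\,p}$ and $R_x^{\,q}$, invoke Lemma \ref{lem:reh3} to see they are subquandles, observe their union is $X$ since $P\cup Q=L$, and apply Lemma \ref{lem:reh2} to force the dichotomy. Your write-up merely makes explicit two bridges the paper leaves implicit (cycle length divides the exponent iff the cycle is fixed pointwise, and $X=Y_p$ gives $q\mid p$), which is fine.
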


\begin{proof}
Given $x\in X$, let $X_p=\{y\in X\mid R_x^p(y)=y\}$ and $X_q=\{z\in X\mid R_x^q(z)=z\}$. We know both $X_p$ and $X_q$ are subquandles of $X$ by Lemma \ref{lem:reh3}. Moreover, $X=X_p\cup X_q$. Then, by Lemma \ref{lem:reh2}, either $X=X_p$ or $X=X_q$, which means that either $q\mid p$ or $p\mid q$.
\end{proof}

\begin{corollary}\label{cor:quasihayashi}
Let $(X,*)$ be a connected quandle with profile $(\ell_1,\dots,\ell_c)$ and let $\ell=\mathrm{lcm}(\ell_i\, : \, \ell_i\nmid \ell_c)$. Then either $\ell\mid\ell_c$ or $\ell_c\mid\ell$.
\end{corollary}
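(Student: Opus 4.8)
The plan is to obtain the statement as a direct specialization of Theorem \ref{thm:generalization}, choosing the partition of $L$ dictated by divisibility of $\ell_c$. First I would set
\[
P = \{\ell_i \in L : \ell_i \nmid \ell_c\}
\quad\text{and}\quad
Q = \{\ell_i \in L : \ell_i \mid \ell_c\}.
\]
These two subsets are disjoint and their union is all of $L$, so $P$ and $Q$ form an admissible input for Theorem \ref{thm:generalization}.

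Next I would identify the two least common multiples produced by this choice. By the very definition of $\ell$, the first one is $\mathrm{lcm}(P) = \ell$. For the second, I would note that every element of $Q$ divides $\ell_c$, so $\mathrm{lcm}(Q) \mid \ell_c$; conversely $\ell_c \in Q$ (since $\ell_c \mid \ell_c$), whence $\ell_c \mid \mathrm{lcm}(Q)$. Combining the two divisibilities gives $\mathrm{lcm}(Q) = \ell_c$.

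Applying Theorem \ref{thm:generalization} to $p = \mathrm{lcm}(P) = \ell$ and $q = \mathrm{lcm}(Q) = \ell_c$ then yields at once that either $p \mid q$ or $q \mid p$, that is, either $\ell \mid \ell_c$ or $\ell_c \mid \ell$, which is precisely the claim.

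Since the whole argument is a one-line instantiation of the theorem, I do not expect a genuine obstacle. The only point requiring a word of care is the degenerate case where $P = \emptyset$, i.e. every $\ell_i$ divides $\ell_c$: there the convention $\mathrm{lcm}\,\emptyset = 1$ forces $\ell = 1$, so the conclusion $\ell \mid \ell_c$ holds trivially and remains consistent with the statement of Theorem \ref{thm:generalization}.
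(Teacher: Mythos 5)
Your proof is correct and is essentially the paper's own argument: the paper likewise partitions $L$ into $P=\{\ell_i : \ell_i\mid \ell_c\}$ and $Q=\{\ell_i : \ell_i\nmid \ell_c\}$ (your labels are swapped, which is immaterial since Theorem \ref{thm:generalization} is symmetric in $P$ and $Q$) and applies the theorem. Your added details---verifying $\mathrm{lcm}(Q)=\ell_c$ via $\ell_c\in Q$, and noting that the degenerate case $P=\emptyset$ is covered by the convention $\mathrm{lcm}\,\emptyset=1$---merely make explicit what the paper leaves implicit.
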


\begin{proof}
Take $P=\{\ell_i\, : \, \ell_i\mid \ell_c\}$ and $Q=\{\ell_i\, : \, \ell_i\nmid \ell_c\}$ and apply  Theorem \ref{thm:generalization}. The result follows.
\end{proof}

Corollary \ref{cor:quasihayashi} states that given a connected quandle $(X,*)$ with profile $(\ell_1,\dots,\ell_c)$ either $(X,*)$ satisfies Hayashi's conjecture or $\ell_c\mid\ell$, where $\ell=\mathrm{lcm}(\ell_i\, : \, \ell_i\nmid \ell_c)$. There is no other alternative.

We now introduce notation and a theorem, Theorem \ref{thm:perm}, concerning how the right translations relate to one another.

\begin{definition}\label{def:quandleNotation}
Let $(X,*)$ be a connected quandle of order $n$ with profile  $(\ell_1,\dots, \ell_c)$, where $1=\ell_1\leq\cdots\leq \ell_c$. We set \vspace{-5px}\begin{align*}
a_0&:=0;\\
a_s&:=\sum_{r=1}^{s}\ell_r,\quad\text{for   }s\in\{1,\dots,c\};\\
a_s'&:=a_{s-1}+1,\quad\text{for   }s\in\{1,\dots,c\};\\
C_s&:=\{a_s',\dots,a_s\}\subseteq X,\quad\text{for   }s\in\{1,\dots,c\};\\
C_{s_1,\dots,s_r}&:=\bigcup\nolimits_{s\in\{s_1,\dots,s_r\}}C_s,\quad\text{for   }r\in\{1,\dots,c\}\text{   and   }1\leq s_1<\cdots<s_r\leq c.
\end{align*}
Note that $a_1=a_1'=1$, $a_2'=2$ and $a_c=n$. Besides, $\ell_s= |C_s|,\text{ for   }s\in\{1,\dots,c\}$, $C_1=\{1\}$ and the $C_s$'s form a partition of $X=\{1,\dots,n\}$.
\end{definition}

For example, for the quandle $Q_{9,4}$ in Table \ref{table:1}, $C_1=\{1\}$, $C_2=\{2,3\}$ and $C_3=\{4,5,6,7,8,9\}$ constitute a partition of $X=\{1,\dots,9\}$. \textbf{Unless otherwise stated in the sequel, $(X,*)$ will be as in Definition \ref{def:quandleNotation}. We will keep to the notation of Definition \ref{def:quandleNotation}.}\par

\begin{theorem}\label{thm:perm}
Let $(X,*)$ be a connected quandle of order $n$ with profile  $(\ell_1,\dots,\ell_c)$, where $1=\ell_1\leq\cdots\leq \ell_c$. Then, modulo renaming the elements, the right translations of $(X,*)$ satisfy the following conditions:

\begin{enumerate}
    \item $R_1=(1)(2\,\cdots\,a_{2})(a_{3}'\,\cdots\,a_{3})\,\cdots\,(a_{c}'\,\cdots\,n)$;
    \item $R_{a_{s-1}+k}=R_1^{k}R_{a_s}R_1^{-k}$, for $s\in\{1,\dots,c\}$ and for $k\in\{1,\dots,\ell_s\}$;
    \item $R_1^{R_{a_s}(1)-a_{t-1}}R_{a_t}R_1^{-(R_{a_s}(1)-a_{t-1})}=R_{a_s}R_1 R_{a_s}^{-1}$, for $s,t\in\{1,\dots,c\}$ such that $R_{a_s}(1)\in C_t$;
    \item if $R_j(i)=i$, then $R_i R_j=R_j R_i$, for $i,j\in X$;
    \item $R_{a_t}^{-1} R_1 R_{a_t}= R_1^k R_{a_s} R_1^{-k}$, for $s,t\in\{1,\dots,c\}$ where it has been assumed that $R_{a_t}(a_{s-1}+k)=1$,  for some $k\in\{1,\dots,\ell_s\}$.
\end{enumerate}
\end{theorem}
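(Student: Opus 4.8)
The plan is to reduce all five clauses to the single structural identity $R_{R_i(j)}=R_iR_jR_i^{-1}$ from Theorem \ref{thm:equivdef}, together with idempotency ($R_i(i)=i$) and Lemma \ref{lem:connected}. The only genuinely new ingredient is clause~1, which fixes a convenient labeling of $X$; clauses~2--5 are then bookkeeping consequences of the fundamental relation once that labeling is in place. I would open by establishing clause~1 via renaming. By Lemma \ref{lem:connected} the cycle structure of $R_1$ equals the profile $(\ell_1,\dots,\ell_c)$, so $R_1$ has one cycle of each length $\ell_s$ (with multiplicity), and $R_1(1)=1$ by idempotency distinguishes a fixed point. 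Since any permutation can be written as a product of cycles supported on prescribed blocks of consecutive integers, I would relabel $X$ so that $1$ is the fixed point forming $C_1$ and the length-$\ell_s$ cycle acts as $(a_s'\,\cdots\,a_s)$ on $C_s$; this is exactly the displayed form. Renaming amounts to conjugating every $R_i$ by a fixed bijection of $X$, which preserves the quandle axioms, so we may work in this labeling henceforth.

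With clause~1 in force, the key observation for clause~2 is that the cyclic action of $R_1$ on $C_s$ gives $R_1^k(a_s)=a_{s-1}+k$ for $k\in\{1,\dots,\ell_s\}$, the top $a_s$ of the cycle wrapping to $a_{s-1}+1$ and then increasing by one at each step. Applying $R_{R_1(j)}=R_1R_jR_1^{-1}$ iteratively with $j=a_s$ then yields $R_{a_{s-1}+k}=R_{R_1^k(a_s)}=R_1^kR_{a_s}R_1^{-k}$, which is clause~2. Clause~4 is even more immediate: if $R_j(i)=i$ then $R_i=R_{R_j(i)}=R_jR_iR_j^{-1}$, so $R_iR_j=R_jR_i$.

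Clauses~3 and~5 I would then derive as corollaries of the fundamental relation combined with clause~2. For clause~3, the right-hand side $R_{a_s}R_1R_{a_s}^{-1}$ equals $R_{R_{a_s}(1)}$; writing $R_{a_s}(1)=a_{t-1}+k'\in C_t$ with $k'=R_{a_s}(1)-a_{t-1}\in\{1,\dots,\ell_t\}$ and invoking clause~2 rewrites this as $R_1^{k'}R_{a_t}R_1^{-k'}$, the left-hand side. For clause~5, the hypothesis $R_{a_t}(a_{s-1}+k)=1$ gives $R_1=R_{R_{a_t}(a_{s-1}+k)}=R_{a_t}R_{a_{s-1}+k}R_{a_t}^{-1}$, hence $R_{a_t}^{-1}R_1R_{a_t}=R_{a_{s-1}+k}$, which clause~2 identifies with $R_1^kR_{a_s}R_1^{-k}$.

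I do not expect a serious obstacle: once the fundamental relation and Lemma \ref{lem:connected} are available, every clause is essentially a substitution. The points requiring the most care are the justification that the relabeling in clause~1 is legitimate (conjugating all right translations by one bijection, and verifying the profile ordering $\ell_1\leq\cdots\leq\ell_c$ is respected by the chosen blocks), and the wrap-around index arithmetic $R_1^k(a_s)=a_{s-1}+k$ underpinning clause~2, since an off-by-one error there would propagate into clauses~3 and~5.
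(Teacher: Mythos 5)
Your proposal is correct and follows essentially the same route as the paper's proof: clause~1 by relabeling, clause~2 by iterating the fundamental relation $R_{R_1(j)}=R_1R_jR_1^{-1}$ of Theorem~\ref{thm:equivdef} (your explicit formula $R_1^k(a_s)=a_{s-1}+k$ is just the paper's induction unwound), and clauses~3--5 by direct substitution into that relation together with clause~2. Your more explicit justification of the relabeling step (conjugating all right translations by one fixed bijection) is simply a fuller account of the paper's terse ``without loss of generality.''
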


\begin{proof}
We prove assertions \textit{1.} to \textit{5.}.\par

\vspace{5px}

\textit{1.} We may assume that $R_1=(1)(2\,\cdots\,a_{2})(a_{3}'\,\cdots\,a_{3})\,\cdots\,(a_{c}'\,\cdots\,n)$ without loss of generality. If necessary, we might relabel the indices. \textbf{This expression for $R_1$ will be assumed in the sequel.} Notice that the elements of $(X,*)$ that belong to the cycle $(a_s',\dots,a_s)$ are precisely the elements of $C_s$, for each $s\in\{1,\dots,c\}$. Moreover, the length of the cycle $(a_s',\dots,a_s)$ is $\ell_s$, for each $s\in\{1,\dots,c\}$.\par

\vspace{5px}

\textit{2.} Let $s\in\{1,\dots,c\}$. First, we have $R_{a_{s-1}+1}=R_{R_1(a_s)}=R_1 R_{a_s}R_1^{-1}$ by assertion \textit{1.} and Theorem \ref{thm:equivdef}. Now, if $R_{a_{s-1}+k}= R_1^k R_{a_s} R_1^{-k}$, for $k\in\{1,\dots,\ell_s-1\}$, again by assertion \textit{1.} and Theorem \ref{thm:equivdef}, we get \[ R_{a_{s-1}+k+1}= R_{ R_1(a_{s-1}+k)}= R_1 R_{a_{s-1}+k} R_1^{-1}= R_1 R_1^{k} R_{a_s} R_1^{-k} R_1^{-1}= R_1^{k+1} R_{a_s} R_1^{-(k+1)}.\] Hence, we conclude, by induction, that $R_{a_{s-1}+k}= R_1^k R_{a_s} R_1^{-k}$, for $s\in\{1,\dots,c\}$ and $k\in\{1,\dots,\ell_s\}$.\par

\vspace{5px}

\textit{3.} Let $s,t\in\{1,\dots,c\}$ be such that $R_{a_s}(1)\in C_t$. By assertion \textit{2.}, we have that \[R_{ R_{a_s}(1)}=R_{a_{t-1}+(R_{a_s}(1)-a_{t-1})}=R_1^{R_{a_s}(1)-a_{t-1}}R_{a_t}R_1^{-( R_{a_s}(1)-a_{t-1})}.\] Since $R_{R_{a_s}(1)}=R_{a_s}R_1 R_{a_s}^{-1}$ by Theorem \ref{thm:equivdef}, we conclude that $R_1^{R_{a_s}(1)-a_{t-1}}R_{a_t}R_1^{-(R_{a_s}(1)-a_{t-1})}=R_{a_s}R_1 R_{a_s}^{-1}$, for $s,t\in\{1,\dots,c\}$ such that $R_{a_s}(1)\in C_t$.

\vspace{5px}

\textit{4.} Let $i,j\in X$ be such that $R_j(i)=i$. Then by Theorem \ref{thm:equivdef}, $R_i=R_{R_j(i)}=R_j R_i R_j^{-1}\Leftrightarrow R_i R_j=R_j R_i$.

\vspace{5px}

\textit{5.} Let $s,t\in\{1,\dots,c\}$ and $k\in\{1,\dots,\ell_s\}$ satisfy $R_{a_t}(a_{s-1}+k)=1$. By Theorem \ref{thm:equivdef} and assertion \textit{2.}, \[ R_1= R_{ R_{a_t}(a_{s-1}+k)}= R_{a_t} R_{a_{s-1}+k} R_{a_t}^{-1}= R_{a_t} R_1^k R_{a_s} R_1^{-k} R_{a_t}^{-1}\Leftrightarrow R_{a_t}^{-1} R_1 R_{a_t}= R_1^k R_{a_s} R_1^{-k},\] where $s,t\in\{1,\dots,c\}$ and $k\in\{1,\dots,\ell_s\}$ are such that $R_{a_t}(a_{s-1}+k)=1$.
\end{proof}

Because least common multiples of pairs of integers are going to be used as exponents in the sequel, we resort to a lighter notation.
\begin{definition}
Given  $i,j\in\mathbb{Z}^+$, we let $[i,j]$ stand for their least common multiple.
\end{definition}

\begin{prop}\label{prop:main}
%Let $i_t,i_u\in X$ be elements of $(X,*)$ such that $i_t\in C_t$, $i_u\in C_u$ and $i_t*i_u\in C_v$, for some $t,u,v\in\{1,\dots,c\}$.
Let $(X, *)$ be a finite quandle. Let $t,u \in\{1,\dots,c\}$. Let $i_t\in C_t$, $i_u\in C_u$ with $i_t*i_u\in C_v$, for some $v \in\{1,\dots,c\}$. Then $\ell_v\mid[\ell_t,\ell_u]$.
\end{prop}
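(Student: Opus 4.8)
The plan is to prove the stronger, cleaner statement that $R_1^{[\ell_t,\ell_u]}$ fixes the element $i_t*i_u$, and to read off the divisibility from this. Indeed, once I know that $R_1^{[\ell_t,\ell_u]}(i_t*i_u)=i_t*i_u$, the conclusion is immediate: by assumption $i_t*i_u\in C_v$, and $C_v$ is a cycle of $R_1$ of length $\ell_v$ (Theorem \ref{thm:perm}(1) together with Definition \ref{def:quandleNotation}), so the $R_1$-period of the point $i_t*i_u$ is exactly $\ell_v$. A permutation power fixes a point precisely when the point's period divides the exponent, whence $\ell_v\mid[\ell_t,\ell_u]$.

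The engine of the argument is that each right translation is an automorphism of $(X,*)$ (guaranteed by Axioms \textit{2.} and \textit{3.} of Definition \ref{def:quandle}, as noted after the definition of the translations). In particular $R_1(x*y)=R_1(x)*R_1(y)$ for all $x,y\in X$, and iterating gives $R_1^m(x*y)=R_1^m(x)*R_1^m(y)$ for every $m\geq0$. First I would set $m=[\ell_t,\ell_u]$. Since $i_t\in C_t$ and $C_t$ is an $R_1$-cycle of length $\ell_t$, the $R_1$-period of $i_t$ is $\ell_t$; because $\ell_t\mid m$, this yields $R_1^m(i_t)=i_t$, and symmetrically $R_1^m(i_u)=i_u$. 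Recalling $i_t*i_u=R_{i_u}(i_t)$ and applying the automorphism identity then gives
\[
R_1^m(i_t*i_u)=R_1^m(i_t)*R_1^m(i_u)=i_t*i_u ,
\]
which is exactly the fixed-point fact the first paragraph needs.

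There is no genuine obstacle here: the calculation is a one-liner once the right move is spotted. The only thing to be careful about is to resist the temptation to manipulate the conjugation relations of Theorem \ref{thm:perm}, and instead to use the plain automorphism property of $R_1$ to push $R_1^m$ through the product $i_t*i_u$. The single decisive choice is the exponent $m=[\ell_t,\ell_u]$, taken as a least common multiple precisely so that it simultaneously kills the $R_1$-motion of $i_t$ (period $\ell_t$) and of $i_u$ (period $\ell_u$); any common multiple of $\ell_t$ and $\ell_u$ would work, but using the lcm gives the sharp divisor $[\ell_t,\ell_u]$ claimed in the statement.
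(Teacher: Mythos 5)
Your proof is correct and is essentially identical to the paper's: both apply the automorphism property of $R_1$ with exponent $[\ell_t,\ell_u]$ to deduce $R_1^{[\ell_t,\ell_u]}(i_t*i_u)=i_t*i_u$, then conclude $\ell_v\mid[\ell_t,\ell_u]$ from the fact that $i_t*i_u$ lies on the $R_1$-cycle $C_v$ of length $\ell_v$. Your phrasing in terms of the $R_1$-period of the point is a slightly cleaner way of stating the paper's periodic-sequence observation, but the argument is the same.
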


\begin{proof}
As $[\ell_t,\ell_u]$ is a multiple of $\ell_t$ and $i_t\in C_t$, then $R_1^{[\ell_t,\ell_u]}(i_t) = i_t$. Analogously, $R_1^{[\ell_t,\ell_u]}(i_u) = i_u$. Then, \[i_t*i_u=  R_1^{[\ell_t,\ell_u]}(i_t)* R_1^{[\ell_t,\ell_u]}(i_u) =  R_1^{[\ell_t,\ell_u]}(i_t*i_u).\] Note that $( R_1^{\,m} (i_t*i_u)\, :\, m\in \mathbb{Z}_0^+)$ is a periodic sequence, whose period is $\ell_v$. The sequence starts in $i_t*i_u$, and is formed by copies of the cycle $C_v$. Then $i_t*i_u= R_1^{[\ell_t,\ell_u]}(i_t*i_u)$ implies $\ell_v\mid[\ell_t,\ell_u]$. This completes the proof.
\end{proof}

\begin{definition}
For $t,u\in\{1,\dots,c\}$ set $$\mathcal{R}_{t,u}:=\{x*y:x\in C_t\wedge y\in C_u\} .$$ Note that each $\mathcal{R}_{t,u}$ is non-empty.
\end{definition}

%Unless otherwise stated in the sequel, we assume each $\mathcal{R}_{t,u}$ is a subset of the quandle $(X,*)$ under study.

For example, let $(X,*)=Q_{9,4}$, cf. Table \ref{table:1}. Then, $\mathcal{R}_{3,2}=\{4,5,6,7,8,9\}$ while $\mathcal{R}_{2,1}=\{2,3\}$.

\begin{corollary}\label{cor:main}
Let $t,u\in\{1,\dots,c\}$ and consider $\mathcal{R}_{t,u}$. Let $I:=\{w:\ell_w\mid[\ell_t,\ell_u]\}$. Then $\mathcal{R}_{t,u}\subseteq\bigcup_{x\in I}C_x$.
\end{corollary}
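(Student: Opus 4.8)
The plan is to deduce this directly from Proposition \ref{prop:main}, since the corollary is essentially a repackaging of that result applied elementwise and then reformulated in terms of the index set $I$. The strategy is to pick an arbitrary element of $\mathcal{R}_{t,u}$, locate the cycle class it inhabits, and check that the index of that class satisfies the divisibility condition defining $I$.

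Concretely, I would first take an arbitrary $z\in\mathcal{R}_{t,u}$. By the definition of $\mathcal{R}_{t,u}$, there exist $i_t\in C_t$ and $i_u\in C_u$ with $z=i_t*i_u$. Since the $C_s$ form a partition of $X$ (Definition \ref{def:quandleNotation}), there is a unique $v\in\{1,\dots,c\}$ with $z\in C_v$; in other words $i_t*i_u\in C_v$. Next I would invoke Proposition \ref{prop:main} with exactly this data: it gives $\ell_v\mid[\ell_t,\ell_u]$. By the definition of $I:=\{w:\ell_w\mid[\ell_t,\ell_u]\}$, this is precisely the statement that $v\in I$. Hence $z\in C_v\subseteq\bigcup_{x\in I}C_x$.

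Since $z\in\mathcal{R}_{t,u}$ was arbitrary, I would conclude $\mathcal{R}_{t,u}\subseteq\bigcup_{x\in I}C_x$, which is the desired inclusion. The only point requiring care is that a single set $\mathcal{R}_{t,u}$ may meet several distinct cycle classes $C_v$; the argument handles this automatically, because Proposition \ref{prop:main} is uniform over all choices of $i_t\in C_t$ and $i_u\in C_u$, so the index $v$ of whichever class each product lands in always satisfies the divisibility bound. There is no genuine obstacle here: the substantive content lives in Proposition \ref{prop:main}, and the corollary merely translates its conclusion $\ell_v\mid[\ell_t,\ell_u]$ into the membership $v\in I$ and assembles the pointwise containments into a single set inclusion.
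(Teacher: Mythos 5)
Your proof is correct and follows essentially the same route as the paper's own: pick an arbitrary product $i_t*i_u\in\mathcal{R}_{t,u}$, locate its class $C_v$ via the partition, apply Proposition \ref{prop:main} to get $\ell_v\mid[\ell_t,\ell_u]$, and conclude $v\in I$. Your version merely spells out the uniqueness of $v$ and the arbitrariness of the element a bit more explicitly than the paper does.
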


\begin{proof}
Let $t,u \in\{1,\dots,c\}$. Let $i_t\in C_t$, $i_u\in C_u$ with $i_t*i_u\in C_v$, for some $v \in\{1,\dots,c\}$.
%Let $i_t\in C_t$, $i_u\in C_u$ with $i_t*i_u\in C_v$, for $v\in\{1,\dots,c\}$.
Then, by Proposition \ref{prop:main}, $\ell_v\mid[\ell_t,\ell_u]$, so $v\in I$ and $i_t*i_u\in\bigcup_{x\in I}C_x$. In particular, $\mathcal{R}_{t,u}\subseteq\bigcup_{x\in I}C_x$.
\end{proof}

\begin{corollary}\label{cor:content}
Let $t,u\in\{1,\dots,c\}$ such that $\ell_t\nmid \ell_u$, and consider $\mathcal{R}_{t,u}$. Let $I:=\{w:\ell_w\nmid \ell_u\wedge(\ell_u\nmid \ell_w\vee \ell_t\mid \ell_w)\}$. Then $\mathcal{R}_{t,u}\subseteq\bigcup_{x\in I}C_x$.
\end{corollary}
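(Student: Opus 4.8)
The plan is to show that for arbitrary $i_t\in C_t$ and $i_u\in C_u$, if $i_t*i_u\in C_v$ then $v\in I$; this is equivalent to the asserted inclusion, since $\mathcal{R}_{t,u}$ is precisely the set of such products. Thus I must establish two facts about the index $v$: first, that $\ell_v\nmid\ell_u$, and second, that $\ell_u\nmid\ell_v$ or $\ell_t\mid\ell_v$ (equivalently, that $\ell_u\mid\ell_v$ forces $\ell_t\mid\ell_v$). Both are obtained from a single mechanism, refining the argument behind Proposition~\ref{prop:main}: a power $R_1^{m}$ is an automorphism of $(X,*)$, it fixes exactly the elements lying in $R_1$-cycles whose length divides $m$, and by right-invertibility the map $x\mapsto x*i_u=R_{i_u}(x)$ is injective, so equal products may be cancelled on the right.

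For the first fact I argue by contradiction, supposing $\ell_v\mid\ell_u$. Then $R_1^{\ell_u}$ fixes both $i_u$ (as $\ell_u\mid\ell_u$) and $i_v=i_t*i_u$ (as $\ell_v\mid\ell_u$). Applying the automorphism $R_1^{\ell_u}$ to $i_t*i_u$ gives $i_v=R_1^{\ell_u}(i_t*i_u)=R_1^{\ell_u}(i_t)*R_1^{\ell_u}(i_u)=R_1^{\ell_u}(i_t)*i_u$, whence $R_1^{\ell_u}(i_t)*i_u=i_t*i_u$. Right cancellation forces $R_1^{\ell_u}(i_t)=i_t$, i.e. $\ell_t\mid\ell_u$, contradicting the standing hypothesis $\ell_t\nmid\ell_u$. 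Hence $\ell_v\nmid\ell_u$.

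For the second fact I assume $\ell_u\mid\ell_v$ and claim $\ell_t\mid\ell_v$. Here $R_1^{\ell_v}$ fixes $i_v$ (as $\ell_v\mid\ell_v$) and fixes $i_u$ (as $\ell_u\mid\ell_v$). The same computation, with $\ell_u$ replaced by $\ell_v$, yields $i_v=R_1^{\ell_v}(i_t*i_u)=R_1^{\ell_v}(i_t)*i_u=i_t*i_u$, and right cancellation gives $R_1^{\ell_v}(i_t)=i_t$, i.e. $\ell_t\mid\ell_v$, as claimed. Combining the two facts shows $v\in I$, and since $i_t,i_u$ were arbitrary I conclude $\mathcal{R}_{t,u}\subseteq\bigcup_{x\in I}C_x$.

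The only genuinely non-formal point is the second fact: unlike the containment in Corollary~\ref{cor:main}, the implication $\ell_u\mid\ell_v\Rightarrow\ell_t\mid\ell_v$ does not follow from number theory applied to $\ell_v\mid[\ell_t,\ell_u]$ alone (for instance $\ell_t=4,\ell_u=6,\ell_v=6$ is consistent with $\ell_v\mid[\ell_t,\ell_u]$ yet violates it), so it must be extracted from the quandle structure, and the right-cancellation step is exactly what supplies it. Everything else reduces to the routine observation that $R_1^{m}$ acts as an automorphism whose fixed set is the union of the $C_s$ with $\ell_s\mid m$.
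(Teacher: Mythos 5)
Your proof is correct and takes essentially the same route as the paper's: both apply a power of the automorphism $R_1$ that fixes $i_u$ and $i_t*i_u$ and then cancel $i_u$ using injectivity of the right translation $R_{i_u}$. The paper merely packages your two facts into a single contradiction by taking the exponent $[\ell_u,\ell_v]$, which equals $\ell_u$ when $\ell_v\mid\ell_u$ and $\ell_v$ when $\ell_u\mid\ell_v$ --- precisely the two exponents you use in your separate cancellation steps.
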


\begin{proof}
Let $t,u \in\{1,\dots,c\}$. Let $i_t\in C_t$, $i_u\in C_u$ with $i_t*i_u\in C_v$, for some $v \in\{1,\dots,c\}$, and such that $\ell_v\mid \ell_u\vee(\ell_u\mid \ell_v\wedge \ell_t\nmid \ell_v)$. Therefore $[\ell_u,\ell_v]\in\{\ell_u,\ell_v\}$. Also, $R_1^{[\ell_u,\ell_v]}(i_t)* R_1^{[\ell_u,\ell_v]}(i_u)= R_1^{[\ell_u,\ell_v]}(i_t*i_u)\Leftrightarrow  R_1^{[\ell_u,\ell_v]}(i_t)*i_u=i_t*i_u$. However, $i_t\neq  R_1^{[\ell_u,\ell_v]}(i_t)$ whether $[\ell_u,\ell_v]=\ell_u$ (since $\ell_t\nmid \ell_u$) or $[\ell_u,\ell_v]=\ell_v$ (since, in this case, $\ell_u\mid \ell_v\wedge \ell_t\nmid \ell_v$). Hence $R_{i_u}$ is not injective, which is a contradiction. Therefore $v\in I$ and $i_t*i_u\in\bigcup_{x\in I}C_x$. In particular, $\mathcal{R}_{t,u}\subseteq\bigcup_{x\in I}C_x$.
\end{proof}

\begin{corollary}\label{cor:latin}
Suppose the conditions of Corollary \ref{cor:content} are
satisfied.  Assume further that $(X,*)$ is latin and $\ell_u\nmid \ell_t$. Let $J:=\{w:\ell_w\nmid \ell_u\wedge \ell_w\nmid \ell_t \wedge(\ell_u\nmid \ell_w \vee \ell_t\mid \ell_w) \wedge (\ell_t\nmid \ell_w \vee \ell_u\mid \ell_w)\}$. Then $\mathcal{R}_{t,u}\subseteq\bigcup_{x\in J}C_x$.
\end{corollary}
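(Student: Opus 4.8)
The plan is to start from what Corollary \ref{cor:content} already gives. Since $\ell_t\nmid\ell_u$, its hypotheses are met, so for any $i_t\in C_t$ and $i_u\in C_u$ with $i_t*i_u\in C_v$ we know $v\in I$, i.e. $\ell_v\nmid\ell_u$ and $(\ell_u\nmid\ell_v\vee\ell_t\mid\ell_v)$. Comparing the definitions of $I$ and $J$, membership in $J$ requires in addition the two \emph{symmetric} conditions $\ell_v\nmid\ell_t$ and $(\ell_t\nmid\ell_v\vee\ell_u\mid\ell_v)$. Hence it suffices to establish these two extra conditions, equivalently to rule out the disjunction $\ell_v\mid\ell_t\vee(\ell_t\mid\ell_v\wedge\ell_u\nmid\ell_v)$, which is exactly their joint negation.

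The key observation is that these two extra conditions are precisely the ones Corollary \ref{cor:content} would produce with the roles of $t$ and $u$ interchanged, and that the latin hypothesis supplies the injectivity needed to run that argument on the other side. In the proof of Corollary \ref{cor:content} one writes $i_t*i_u=R_{i_u}(i_t)$ and exploits injectivity of the right translation $R_{i_u}$ as $i_t$ varies. In the latin case each left translation $L_{i_t}\colon y\mapsto i_t*y$ is also a permutation, so one may instead regard $i_t*i_u=L_{i_t}(i_u)$ as a function of $i_u$ and exploit its injectivity. This is where both the latin assumption and the added hypothesis $\ell_u\nmid\ell_t$ of this corollary get used.

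Concretely, I would argue by contradiction, assuming $\ell_v\mid\ell_t\vee(\ell_t\mid\ell_v\wedge\ell_u\nmid\ell_v)$. In either disjunct one of $\ell_t,\ell_v$ divides the other, so $[\ell_t,\ell_v]\in\{\ell_t,\ell_v\}$. Since $R_1^{[\ell_t,\ell_v]}$ is an automorphism, $R_1^{[\ell_t,\ell_v]}(i_t)*R_1^{[\ell_t,\ell_v]}(i_u)=R_1^{[\ell_t,\ell_v]}(i_t*i_u)$. Now $R_1^{[\ell_t,\ell_v]}$ fixes $i_t$ (because $\ell_t\mid[\ell_t,\ell_v]$) and fixes $i_t*i_u\in C_v$ (because $\ell_v\mid[\ell_t,\ell_v]$), so the identity collapses to $i_t*R_1^{[\ell_t,\ell_v]}(i_u)=i_t*i_u$, that is $L_{i_t}\big(R_1^{[\ell_t,\ell_v]}(i_u)\big)=L_{i_t}(i_u)$. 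But $R_1^{[\ell_t,\ell_v]}(i_u)\neq i_u$: if $[\ell_t,\ell_v]=\ell_t$ this holds because $\ell_u\nmid\ell_t$ (the hypothesis of this corollary), and if $[\ell_t,\ell_v]=\ell_v$ it holds because $\ell_u\nmid\ell_v$ (part of the second disjunct). This contradicts injectivity of $L_{i_t}$, which is valid since $(X,*)$ is latin. Therefore both extra conditions hold, $v\in J$, and $i_t*i_u\in\bigcup_{x\in J}C_x$; as $i_t,i_u$ were arbitrary, $\mathcal{R}_{t,u}\subseteq\bigcup_{x\in J}C_x$.

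The argument is essentially routine once this viewpoint is in place; the only point demanding care is matching the two sub-cases of the bad disjunction to the correct reason that $i_u$ is moved by $R_1^{[\ell_t,\ell_v]}$. The main obstacle, such as it is, is simply recognizing that the latin property converts the one-sided injectivity argument of Corollary \ref{cor:content} into a fully symmetric one, so that the conditions defining $I$ get doubled (with $t$ and $u$ swapped) into those defining $J$, rather than any genuinely new computation being required.
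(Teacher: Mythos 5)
Your proposal is correct and follows essentially the same route as the paper: both argue by contradiction that $\ell_v\mid\ell_t\vee(\ell_t\mid\ell_v\wedge\ell_u\nmid\ell_v)$ forces $i_t*R_1^{[\ell_t,\ell_v]}(i_u)=i_t*i_u$ with $R_1^{[\ell_t,\ell_v]}(i_u)\neq i_u$, contradicting injectivity of $L_{i_t}$, and then intersect the resulting condition with that of Corollary \ref{cor:content} to land in $J$. Your write-up is in fact slightly more explicit than the paper's in matching each disjunct to the reason $i_u$ is moved ($\ell_u\nmid\ell_t$ versus $\ell_u\nmid\ell_v$), but no new idea is involved.
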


\begin{proof}
The original part of this proof mimics the proof of Corollary \ref{cor:content} but applied to a left translation.
Let $t,u \in\{1,\dots,c\}$. Let $i_t\in C_t$, $i_u\in C_u$ with $i_t*i_u\in C_v$, for some $v \in\{1,\dots,c\}$, and such that $\ell_v\mid \ell_t\vee(\ell_t\mid \ell_v\wedge \ell_u\nmid \ell_v)$. Therefore $[\ell_t,\ell_v]\in\{\ell_t,\ell_v\}$. Furthermore,  $R_1^{[\ell_t,\ell_v]}(i_t)* R_1^{[\ell_t,\ell_v]}(i_u)= R_1^{[\ell_t,\ell_v]}(i_t*i_u)\Leftrightarrow i_t*R_1^{[\ell_t,\ell_v]}(i_u)=i_t*i_u$. However, $i_u\neq R_1^{[\ell_t,\ell_v]}(i_u)$, whether $[\ell_t,\ell_v]=\ell_t$ or $[\ell_t,\ell_v]=\ell_v$. So $L_{i_t}$ is not injective, hence $(X,*)$ is not latin, which is a contradiction. Whence $v\in I'=\{w:\ell_w\nmid \ell_t \wedge(\ell_t\nmid \ell_w \vee \ell_u\mid \ell_w)\}$ and $i_t*i_u\in\bigcup_{x\in I'}C_x$. In particular, $\mathcal{R}_{t,u}\subseteq\bigcup_{x\in I'}C_x$. As Corollary \ref{cor:content} also applies, we get $\mathcal{R}_{t,u}\subseteq \bigcup_{x\in J}C_x$.
\end{proof}

\begin{corollary}\label{cor:byone}
Let $t, u\in\{1,\dots,c\}$ such that $ C_t =\{ i_t \}$ $($for $i_t\in X)$ and consider $\mathcal{R}_{t,u}$. Then $\mathcal{R}_{t,u}\subseteq C_v$, for $v\in\{1,\dots,c\}$ such that $l_v\mid l_u$. Furthermore, for each $i_v\in C_v$, there are $l_u/l_v$ solutions over $C_u$ for the equation $i_t*x=i_v$.
\end{corollary}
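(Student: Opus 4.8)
The plan is to exploit the fact that the hypothesis $C_t=\{i_t\}$ forces $\ell_t=1$, so that $i_t$ is a fixed point of $R_1$, i.e. $R_1(i_t)=i_t$. The crux of the argument is to observe that under this hypothesis the left translation $L_{i_t}\colon y\mapsto i_t*y$ commutes with $R_1$. Indeed, for any $y\in X$, right self-distributivity (axiom \textit{3.} of Definition \ref{def:quandle}) gives $(i_t*y)*1=(i_t*1)*(y*1)$, and since $i_t*1=R_1(i_t)=i_t$ this reads $R_1(i_t*y)=i_t*R_1(y)$, that is $R_1 L_{i_t}=L_{i_t}R_1$. This single identity drives the whole proof.

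Next I would use this commutation to show that $\mathcal{R}_{t,u}=L_{i_t}(C_u)$ is a single $R_1$-orbit. Since by assertion \textit{1.} of Theorem \ref{thm:perm} the set $C_u$ is exactly one cycle of $R_1$, I can fix $y_0\in C_u$ and write $C_u=\{R_1^m(y_0)\,:\,0\le m\le \ell_u-1\}$. Putting $z:=L_{i_t}(y_0)$ and applying the commutation repeatedly, $L_{i_t}(R_1^m(y_0))=R_1^m(z)$, so $\mathcal{R}_{t,u}=\{R_1^m(z)\,:\,0\le m\le \ell_u-1\}$ is contained in the single $R_1$-cycle $C_v$ through $z$. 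This yields $\mathcal{R}_{t,u}\subseteq C_v$ for one value of $v$; that $\ell_v\mid \ell_u=[\ell_t,\ell_u]$ then follows directly from Corollary \ref{cor:main} (alternatively, $R_1^{\ell_u}(z)=L_{i_t}(R_1^{\ell_u}(y_0))=z$ forces the orbit length $\ell_v$ of $z$ to divide $\ell_u$).

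For the counting statement I would read off the fibers from the same parametrization. Because $R_1$ restricts to an $\ell_v$-cycle on $C_v$, we have $R_1^m(z)=R_1^{m'}(z)$ iff $m\equiv m'\pmod{\ell_v}$. Hence the map $\{0,\dots,\ell_u-1\}\to C_v$, $m\mapsto R_1^m(z)$, is exactly $\ell_u/\ell_v$-to-one (the quotient being an integer precisely by the divisibility just established), and every $i_v\in C_v$ is attained. Translating back through $x=R_1^m(y_0)$, the solutions $x\in C_u$ of $i_t*x=i_v$ are in bijection with the values of $m$ hitting $i_v$, so there are exactly $\ell_u/\ell_v$ of them, as claimed; in passing this also shows $\mathcal{R}_{t,u}=C_v$.

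I expect the only genuinely delicate point to be spotting the commutation $R_1 L_{i_t}=L_{i_t}R_1$; once it is in hand, everything reduces to elementary bookkeeping about a permutation acting as a single cycle on each $C_s$. I would nonetheless take care to justify that $C_u$ and $C_v$ are genuine single $R_1$-cycles (via Theorem \ref{thm:perm}, assertion \textit{1.}, and Definition \ref{def:quandleNotation}) and that the indexing $m\in\{0,\dots,\ell_u-1\}$ enumerates $C_u$ without repetition, since the fiber count rests on these facts.
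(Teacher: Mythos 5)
Your proposal is correct and takes essentially the same route as the paper: your commutation $L_{i_t}R_1=R_1L_{i_t}$ is precisely the paper's step of applying the automorphism $R_1^m$ to $i_t*i_u=i_v$ while using $R_1^m(i_t)=i_t$, and your observation that $m\mapsto R_1^m(z)$ is exactly $\ell_u/\ell_v$-to-one is the same bookkeeping as the paper's shift argument with $m+k\ell_v$ taken modulo $\ell_v$. Your explicit framing of $\mathcal{R}_{t,u}$ as a single $R_1$-orbit (hence $\mathcal{R}_{t,u}=C_v$) is just a slightly tidier packaging of the identical idea, with the divisibility $\ell_v\mid\ell_u$ obtained from Corollary \ref{cor:main} as in the paper.
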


\begin{proof}
Assume $C_t=\{ i_t \}$ and let $i_u\in C_u$ and $i_v\in C_v$ such that $i_t*i_u=i_v$. Then, by Corollary \ref{cor:main}, $l_v\mid l_u$,
\[
R_1^{m}(i_t)*R_1^{m}(i_u)= R_1^{m}(i_v)\quad  \Leftrightarrow\quad   i_t\ast \underset{\text{ mod } l_u}{\text{(}i_u+m\text{)}}= \underset{\text{ mod } l_v}{\text{(}i_v+m\text{)}}\,\, ,
\]
for each $m\in\{1,\dots,\ell_u\}$. Then, with $m\in \{ 1, 2, \dots , l_v\}$ and $k\in \{ 1, 2, \dots , l_u/l_v \}$
\[
i_t* (i_u + m + k l_v) = i_v+m+k l_v \underset{\text{ mod } l_v}{=} i_v+m = i_t* (i_u+m)\,\, .
\]
Then, for each $i_v\in C_c$, there are $l_u/l_v$ distinct $x\in C_u$ such that $i_t * x = i_v$.

%Now for the uniqueness of $v$. Let $i_u'\in C_u$ and $w\in \{ 1, 2, \dots , c\}$ such that $l_w\mid l_u$ and there is $i_w \in C_w$ such that $i_w=i_t*i_{u}'$. But $$i_t*i_{u}'=i_t*(i_u+m)=i_v+m \in C_v$$so that $w=v$.

%Since $i_v$ belongs to the cycle $(a_{v-1}+1\,\cdots\,a_v)$ of $R_1$, we conclude that $\mathcal{R}_t^u\subseteq C_v$ and there are $\ell_u/\ell_v$ $(x,y)$-entries equal to each $z\in C_v$, for $(x,y)\in C_t\times C_u$.
\end{proof}

\begin{definition}
Let $(X,*)$ be a connected quandle with profile $(\ell_1,\dots,\ell_c)$, where $1=\ell_1\leq\dots\leq \ell_c$. A \emph{cycle quandle table} for $(X,*)$ is a $c\times c$ table whose element in row $t$ and column $u$ is any subset $C\subseteq X$ satisfying $\mathcal{R}_{t,u}:=C_t*C_u\subseteq C$. For convenience, we omit $C$ whenever our best guess is $C=X$ or it is not relevant for the discussion at issue. In the sequel, cycle quandle tables will have an extra $0$-th column (where we display the $C_t$'s) and an extra $0$-th row (where we display the $C_u$'s) to improve legibility.
\end{definition}

\begin{example}
Table \ref{table:2} is an example of a cycle quandle table for $Q_{9,4}$ (cf. Table \ref{table:1}).

\begin{figure}[htbp]\centering
{\renewcommand{\arraystretch}{1.5}
\begin{tabular}{[C{24pt}"C{24pt}|C{24pt}|C{24pt}]}
    \thickhline
    $*$ & $C_1$ & $C_2$ & $C_3$\\
    \thickhline
    $C_1$ & $C_1$ & $C_2$ & $C_3$\\
    \hline
    $C_2$ & $C_2$ & $C_{1,2}$ & $C_3$\\
    \hline
    $C_3$ & $C_3$ & $C_3$ & \\
    \thickhline
\end{tabular}}
\captionof{table}{A cycle quandle table for $Q_{9,4}$.}\label{table:2}
\end{figure}
\end{example}

\begin{prop}\label{prop:2equals}
Let $(X, *)$ be a finite connected quandle whose profile is $(\ell_1, \dots \, \ell_c)$ with $$1=\ell_1<\ell_2< \cdots <\ell_i=\ell_{i+1}< \cdots <\ell_c$$ $$\text{where } \qquad \qquad 2\leq i\leq c-1 \qquad \qquad \text{ and }\qquad \qquad \ell_j\nmid \ell_k \quad \text{ for }\quad j,k\in \{2, \dots , c\}\setminus \{i+1\} \quad \text{ with }\quad  j\neq k .$$Then, the largest term in the injectivity pattern of $(X, *)$ is, at most, $2$.
\end{prop}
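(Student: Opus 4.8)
The plan is to compute the injectivity pattern of $(X,*)$ directly by analysing the left translation $L_1$, whose fibres we can control via Corollary \ref{cor:byone}. Since all left translations share the same injectivity pattern (Lemma \ref{lem:connected}) and the hypothesis $1=\ell_1<\ell_2$ makes $C_1=\{1\}$ the unique singleton class, working with $L_1$ is both legitimate and convenient: Corollary \ref{cor:byone} applies with $t=1$ and $i_t=1$. It tells us that for each $u\in\{1,\dots,c\}$ the image $L_1(C_u)=\mathcal{R}_{1,u}$ lands in a single class $C_{v(u)}$ with $\ell_{v(u)}\mid\ell_u$, and that $L_1$ restricts to an $\ell_u/\ell_{v(u)}$-to-one surjection $C_u\to C_{v(u)}$. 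Consequently every element of a class $C_v$ has the same number of $L_1$-preimages, namely $\sum_{u:\,v(u)=v}\ell_u/\ell_v$, and it suffices to show this quantity never exceeds $2$.

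The first and main step is to rule out $v(u)=1$ for $u\geq2$; this is the one case the divisibility hypothesis cannot exclude, since $\ell_1=1$ divides everything. Here I would invoke the profile. If $v(u)=1$ for some $u\geq2$, then $L_1$ collapses all of $C_u$ to $1$, that is, $1*y=1$, equivalently $R_y(1)=1$, for every $y\in C_u$. Picking any such $y$ (necessarily $y\neq1$, since $u\geq2$), idempotency gives $R_y(y)=y$ as well, so $R_y$ fixes the two distinct points $1$ and $y$. But by Lemma \ref{lem:connected} every right translation has the profile as its cycle structure, and the hypothesis $\ell_1=1<\ell_2$ means that structure has a single $1$-cycle, so $R_y$ has exactly one fixed point --- a contradiction. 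Hence $v(u)\neq1$ whenever $u\geq2$, and the fibre over $1$ has size exactly $1$.

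With $v(u)\neq1$ for $u\geq2$, the relation $\ell_{v(u)}\mid\ell_u$ together with the non-divisibility hypothesis forces $\ell_{v(u)}=\ell_u$: a proper divisor $\ell_{v(u)}$ of $\ell_u$ with both $>1$ would, after replacing the index $i+1$ by $i$ wherever it occurs, yield two distinct indices in $\{2,\dots,c\}\setminus\{i+1\}$ one of whose lengths divides the other, which is forbidden. Thus for $u\notin\{i,i+1\}$ the length $\ell_u$ is attained by the single index $u$, so $v(u)=u$, while for $u\in\{i,i+1\}$ one has $v(u)\in\{i,i+1\}$.

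Finally I would tabulate the fibres. Over $C_1$ the fibre has size $1$. Over a class $C_v$ with $v\geq2$ and $v\notin\{i,i+1\}$, only $u=v$ can satisfy $v(u)=v$, so the fibre has size $\ell_v/\ell_v=1$. Over $C_i$ and over $C_{i+1}$ the only indices $u$ with $v(u)=v$ satisfy $\ell_u=\ell_i$, hence $u\in\{i,i+1\}$, so each such fibre has size at most $|\{i,i+1\}|=2$. Therefore every fibre of $L_1$ has size at most $2$, which is precisely the assertion that the largest term of the injectivity pattern is at most $2$. The crux of the argument is the profile/fixed-point step that eliminates collapse into $C_1$; the remainder is divisibility bookkeeping made possible by there being a single repeated length.
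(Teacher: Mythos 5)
Your proof is correct and takes essentially the same route as the paper's: both analyse $L_1$ through Corollary \ref{cor:byone}, exclude collapse into $C_1$ by counting fixed points of a right translation against the profile (which has a single $1$ since $\ell_1<\ell_2$), use the non-divisibility hypothesis to force $L_1(C_u)=C_u$ for $u\notin\{i,i+1\}$ and $L_1(C_u)\subseteq C_{i,i+1}$ for $u\in\{i,i+1\}$, and conclude via Lemma \ref{lem:connected}. Your version is merely more explicit in places (ruling out $v(u)=1$ uniformly for all $u\geq2$ and tabulating the fibre sizes), but there is no substantive difference.
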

\begin{proof}
We keep the notation of the statement. For $k\notin \{ i, i+1\}$ $${\cal R}_{1, k}\subseteq C_k \qquad {\cal R}_{1, i}\subseteq C_{i, i+1} \qquad {\cal R}_{1, i+1}\subseteq C_{i, i+1}$$ since $[\ell_1, \ell_k]=\ell_k$ and the only $\ell_i$'s that divide $\ell_k$ are $\ell_1(=1)$ and $\ell_k$, using Corollary \ref{cor:main} and further noting that $R_k$ only has one fixed point. Thus, for $k\notin \{ i, i+1\}$, the injectivity pattern of $L_1$ over $C_k$ is $\ell_k/\ell_k=1$, according to Corollary \ref{cor:byone}. For $k\in \{i, i+1\}$, there could be $d_i, c_i\in C_i$ and $c_{i+1}\in C_{i+1}$ such that $$L_1(c_i)=d_i=L_1(c_{i+1}) \quad \Longrightarrow \quad L_1^{-1}(\{d_i\})=\{c_i, c_{i+1}\} .$$Note that there cannot be more than $2$ pre-images due to Corollary \ref{cor:byone} and $\ell_i/\ell_i=1=\ell_{i+1}/\ell_{i+1}$. Since $(X, *)$ is connected by hypothesis, the result follows from Lemma \ref{lem:connected}.
\end{proof}

\begin{corollary}\label{cor:5with2equals}
We keep the conditions of Proposition \ref{prop:2equals}. There is no such quandle when $c=5$.
\end{corollary}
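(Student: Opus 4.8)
The plan is to assume, for contradiction, that a connected quandle $(X,*)$ as in Proposition \ref{prop:2equals} exists with $c=5$, and to exhibit a maximum-length cycle whose image under a single right translation is forced into a strictly shorter cycle. First I would organize the data: write the three distinct non-unit lengths as $a<b<d$, where $d=\ell_5=\ell_c$; the repeated pair sits at position $(i,i+1)$ with $i\in\{2,3,4\}$, so exactly one of $a,b,d$ is doubled. Since $a\nmid d$, Hayashi's Conjecture already fails for this profile, so Corollary \ref{cor:quasihayashi} puts us in the alternative $\ell_c\mid\mathrm{lcm}(a,b)$; applying Theorem \ref{thm:generalization} to each partition of $\{a,b,d\}$ into a singleton and its complementary pair shows moreover that all three pairwise least common multiples of $a,b,d$ coincide with $\mathrm{lcm}(a,b,d)$ and that each of $a,b,d$ divides it. This is the divisibility regime in which the cycle-table containments are sharpest.

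Next I would produce the key containment. Choose a cycle $C_D$ of maximal length $d$ and a shorter cycle $C_U$ as follows: take $\ell_U=a$ when $a$ or $d$ is the doubled value, and $\ell_U=b$ when $b$ is doubled. In each case $\ell_D=d\nmid\ell_U$, so Corollary \ref{cor:content} applies and trims $\mathcal{R}_{D,U}$ to the union of the cycles of the two lengths other than $\ell_U$, one of which is $d$ itself. The point of the case choice is that the \emph{third} length — the one equal to neither $\ell_U$ nor $d$ — is never the doubled one, so the cycles of that length amount to a \emph{single} block $C_T$ with $\ell_T\in\{a,b\}<d$. It then remains to discard the length-$d$ cycles from $\mathcal{R}_{D,U}$; this is exactly the extra exclusion carried out in Corollary \ref{cor:latin} (there $\ell_w=d=\ell_D$ fails the clause $\ell_w\nmid\ell_D$), and it leaves $\mathcal{R}_{D,U}\subseteq C_T$.

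The contradiction is then immediate and robust. Fix any $y\in C_U$. Right-invertibility (axiom 2 of Definition \ref{def:quandle}) makes $R_y$ a bijection of $X$, so $R_y(C_D)$ has exactly $\ell_c=d$ elements; but $R_y(C_D)\subseteq\mathcal{R}_{D,U}\subseteq C_T$ would give $d=|R_y(C_D)|\le|C_T|=\ell_T<d$, which is absurd. Hence no such quandle exists, and the three cases close uniformly.

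The main obstacle is the step that discards the length-$d$ cycles, i.e. the use of Corollary \ref{cor:latin}: that corollary needs $(X,*)$ to be latin, whereas Proposition \ref{prop:2equals} only bounds the injectivity pattern by $2$. I would bridge this gap by first proving the quandle must be latin. Using that $R_1$ and $L_1$ commute — which follows at once from idempotency and right self-distributivity, since $(1*x)*1=(1*1)*(x*1)=1*(x*1)$ — one sees that $L_1$ carries each $R_1$-cycle $C_s$ onto a single cycle $C_{\sigma(s)}$ with $\ell_{\sigma(s)}\mid\ell_s$, and that the induced injectivity value on $C_t$ is $\tfrac{1}{\ell_t}\sum_{\sigma(s)=t}\ell_s$. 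The bound $\le 2$ at $t=1$ forces $\sigma^{-1}(1)=\{1\}$, and the pairwise non-divisibility then pins $\sigma$ to the identity off $\{i,i+1\}$ and to a self-map of $\{i,i+1\}$; the only non-latin possibility is the \emph{collapse} in which both length-$\ell_i$ cycles map to one. Ruling out this collapse is the crux: one must show that a $2$-element fibre of a left translation cannot consist of two elements of a single $R_1$-cycle, which is transparent for $L_1$ but requires care for the $L_{i_t}$ with $i_t\in C_D$ arising in the product above, since conjugation by an element of the right multiplication group (Lemma \ref{lem:connected}) need not preserve the $R_1$-cycle partition. Establishing that latin-ness — or, equivalently, re-running the collision of Corollary \ref{cor:latin} so that the two forced-equal arguments provably lie in distinct cycles — is where I expect the real work to lie; once it is in place, the size contradiction above finishes the argument.
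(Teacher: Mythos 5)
Your architecture is coherent, and several of your preliminary observations (the lcm bookkeeping via Theorem \ref{thm:generalization}, the choice of $\ell_U$ so that the third length occupies a single block, the final size contradiction via bijectivity of $R_y$) are fine. But the step you yourself flag as ``the real work'' is a genuine gap, and it sits at the heart of the proof: trimming the length-$\ell_5$ blocks out of $\mathcal{R}_{D,U}$. You invoke Corollary \ref{cor:latin}, which requires $(X,*)$ to be latin, and latin-ness is simply not available here: Theorem \ref{thm:latin} needs a strictly increasing profile, whereas by hypothesis $\ell_i=\ell_{i+1}$, and Proposition \ref{prop:2equals} only bounds the injectivity pattern by $2$. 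Your $L_1$-analysis (via $L_1R_1=R_1L_1$) correctly reduces non-injectivity of $L_1$ to the collapse $L_1(C_i)=L_1(C_{i+1})$, but that collapse produces fibres of size exactly $2$, perfectly consistent with Proposition \ref{prop:2equals}, and you offer no argument against it. The fallback you suggest --- re-running the collision so as to force three preimages --- fails on the arithmetic: to forbid $i_D*i_U\in C_v$ with $\ell_v=\ell_5$, the only exponents $m$ for which $R_1^m$ fixes both $i_D$ and the product are multiples of $\ell_5$, so three distinct preimages require $\ell_U\nmid 2\ell_5$; yet $\ell_U\mid 2\ell_5$ with $\ell_U\nmid\ell_5$ violates none of the standing hypotheses (e.g.\ distinct lengths $4<5<6$ are pairwise non-divisible while $4\mid 2\cdot 6$, as in a putative profile $(1,4,4,5,6)$), so only two preimages are forced, which the bound $2$ tolerates. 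This is exactly why the paper's collisions are oriented the other way around: there the moving element $j$ lies in $C_5$ while $i$ and $k$ lie in blocks of some common length $e<\ell_5$, the exponent is $e$, and $\ell_5\mid 2e$ would force $\ell_5=2e$, i.e.\ $e\mid\ell_5$, contradicting the hypotheses --- so $j$, $R_1^{e}(j)$, $R_1^{2e}(j)$ are genuinely three distinct preimages. Your orientation (moving element in the short block, fixed pair in the long blocks) destroys that trick.

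For comparison, the paper never needs your containment $\mathcal{R}_{D,U}\subseteq C_T$ and never proves latin-ness. It splits into the doubled-$\ell_2$ case and the doubled-$\ell_3$ (resp.\ doubled-$\ell_4$) cases, uses Corollaries \ref{cor:main} and \ref{cor:content} together with the workable three-preimage collisions above to pin down part of a cycle quandle table, and closes by counting with bijective right translations: in the first case the fifth column forces $\lvert C_{2,3}\rvert=\lvert C_4\rvert$, hence $\ell_2\mid\ell_4$, a contradiction; in the others $\mathcal{R}_{3,5}\subseteq C_2$ forces $\ell_3\leq\ell_2$, a contradiction. If you want to salvage your uniform scheme, the precise lemma you must supply is the one you gesture at: no $2$-element fibre of any left translation can meet a single $R_1$-cycle in two points (your commutation argument gives this for $L_1$ only, and Lemma \ref{lem:connected} transfers the pattern but not the cycle partition). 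Without that lemma, the proposal does not close.
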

\begin{proof}
We consider the cases:
\begin{enumerate}[1.]
  \item \qquad $1=\ell_1<\ell_2=\ell_3<\ell_4 <\ell_5$
  \item \qquad $1=\ell_1<\ell_2<\ell_3=\ell_4 <\ell_5 \quad (\text{respect.,}\quad  1=\ell_1<\ell_2<\ell_3<\ell_4 =\ell_5)$
\end{enumerate}
Here are the proofs for each of them.
\begin{enumerate}[1.]
  \item \qquad $1=\ell_1<\ell_2=\ell_3<\ell_4 <\ell_5$:
$\mathcal{R}_{1,5}\subseteq C_5$ (arguing as in the proof of Proposition \ref{prop:2equals}) and $\mathcal{R}_{5,5}\subseteq C_{1, 5}$ (since $\ell_i\nmid \ell_5$ for $i\in \{ 2, 3, 4\}$ and using Corollary \ref{cor:main}$)$.
We check that $\mathcal{R}_{2,5},\mathcal{R}_{3,5}\subseteq C_4$. First, by Corollary \ref{cor:content}, $\mathcal{R}_{2,5},\mathcal{R}_{3,5}\subseteq C_{2,3,4}$. Proof for the $\mathcal{R}_{2,5}\subseteq C_{2,3,4}$ case (the other one is analogous): with $i\in \{2,3,4\}$, since $\ell_i\nmid \ell_5$ and $\ell_5\nmid \ell_i$ along with $\ell_1, \ell_5\mid \ell_5$, the result follows from Corollary \ref{cor:content}. Now, suppose $i\in C_{2,3}$ and $j\in C_5$ are such that $i*j=k\in C_{2,3}$. Then, $R_1^{\ell_3}(i)*R_1^{\ell_3}(j)=R_1^{\ell_3}(k)\Leftrightarrow i* R_1^{\ell_3}(j)=k$ and $R_1^{\ell_3}(j)\neq j$ because $\ell_5\nmid \ell_3$. Also, since $R_1^{2\ell_3}(i)*R_1^{2\ell_3}(j)= R_1^{2\ell_3}(k)\Leftrightarrow i* R_1^{2\ell_3}(j)=k$. If $\ell_5\mid 2\ell_3$ then $\ell\ell_5=2\ell_3$ with $\ell=1$ since $\ell_3<\ell_5$. But then $\ell_3\mid 2\ell_3=\ell_5$ which conflicts with the standing assumptions. So $\ell_5\nmid 2\ell_3$ and $R_1^{2\ell_3}(j)\neq j$. Then $j\neq R_1^{\ell_3}(j)\neq R_1^{2\ell_3}(j)\neq j$ and  there is, at least, a $3$ in the injectivity pattern of $L_i$, which  conflicts with Proposition \ref{prop:2equals}. Thus, $\mathcal{R}_{2,5},\mathcal{R}_{3,5}\subseteq C_4$.

Now we check that $\mathcal{R}_{4,5}\subseteq C_{2,3}$, leaving the details for the reader since they are analogous to the ones in the preceding paragraph. First, by Corollary \ref{cor:content}, $\mathcal{R}_{4,5}\subseteq C_{2,3,4}$. Now, suppose $i\in C_4$ and $j\in C_5$ are such that $i*j=k\in C_4$. Therefore,  $R_1^{\ell_4}(i)*R_1^{\ell_4}(j)=R_1^{\ell_4}(k)\Leftrightarrow i* R_1^{\ell_4}(j)=k$ and also $R_1^{2\ell_4}(i)* R_1^{2\ell_4}(j)= R_1^{2\ell_4}(k)\Leftrightarrow i* R_1^{2\ell_4}(j)=k$. But as $j\neq  R_1^{\ell_4}(j)\neq R_1^{2\ell_4}(j)\neq j$, there is, at least, a $3$ in the injectivity pattern of $L_i$, which conflicts with Proposition \ref{prop:2equals}. So, $\mathcal{R}_{4,5}\subseteq C_{2,3}$ and Table \ref{table:5} is a cycle quandle table for the quandle at issue.

\begin{figure}[htbp]\centering
{\renewcommand{\arraystretch}{1.5}
\begin{tabular}{[C{24pt}"C{24pt}|C{24pt}|C{24pt}|C{24pt}|C{24pt}]}
    \thickhline
    $*$ & $C_1$ & $C_2$ & $C_3$ & $C_4$ & $C_5$\\
    \thickhline
    $C_1$ &  &  &  &  & $C_5$\\
    \hline
    $C_2$ &  &  &  &  & $C_4$\\
    \hline
    $C_3$ &  &  &  &  & $C_4$\\
    \hline
    $C_4$ &  &  &  &  & $C_{2,3}$\\
    \hline
    $C_5$ &  &  &  &  & $C_{1,5}$\\
    \thickhline
\end{tabular}}
\captionof{table}{Cycle quandle table for $(X,*)$ when $1=\ell_1<\ell_2=\ell_3<\ell_4<\ell_5$ and $\ell_3\nmid \ell_4$, $\ell_3\nmid \ell_5$, $\ell_4\nmid \ell_5$.}\label{table:5}
\end{figure}

Inspecting the last column of Table \ref{table:5}, we see that, by right translation, $C_5$ maps $C_4$ into $C_{2, 3}$. Since there are no more occurrences of $C_2$ or $C_3$ along that column, then $\lvert C_{2,3}\rvert=\lvert C_4\rvert$. Therefore, $\ell_3\mid \ell_4$, which conflicts with the standing assumptions.

  \item \qquad $1=\ell_1<\ell_2<\ell_3=\ell_4 <\ell_5 \quad (\text{respect.,}\quad  1=\ell_1<\ell_2<\ell_3<\ell_4=\ell_5)$:

We check that $\mathcal{R}_{3,5}\subseteq C_2$. First, by Corollary \ref{cor:content}, $\mathcal{R}_{3,5}\subseteq C_{2,3,4}$ (respect., $\mathcal{R}_{3,5}\subseteq C_{2,3}$). Now, suppose $i\in C_3$ and $j\in C_5$ are such that $i*j=k\in C_{3,4}$ (respect., $i*j=k\in C_3$). Then,  $R_1^{\ell_3}(i)*R_1^{\ell_3}(j)= R_1^{\ell_3}(k)\Leftrightarrow i* R_1^{\ell_3}(j)=k$ and $R_1^{2\ell_3}(i)* R_1^{2\ell_3}(j)= R_1^{2\ell_3}(k)\Leftrightarrow i* R_1^{2\ell_3}(j)=k$. But as $j\neq  R_1^{\ell_3}(j)\neq R_1^{2\ell_3}(j)\neq j$, there is, at least, a $3$ in the injectivity pattern of $L_i$, which conflicts with Proposition \ref{prop:2equals}. Thus, $\mathcal{R}_{3,5}\subseteq C_2$. But since $\{x*5\,|\, x\in C_3\}\subseteq \mathcal{R}_{3,5}\subseteq C_2$, this implies that $|C_3|\leq |C_2|$ which conflicts with the standing assumptions.
%Last, we check that $\mathcal{R}_{3,5}\subseteq C_2$. First, by Corollary \ref{cor:content}, $\mathcal{R}_{3,5}\subseteq C_{2,3}$. Now, suppose $i\in C_3$ and $j\in C_5$ are such that $i*j=k\in C_3$. Then,  $R_1^{\ell_3}(i)* R_1^{\ell_3}(j)= R_1^{\ell_3}(k)\Leftrightarrow i* R_1^{\ell_3}(j)=k$ and $R_1^{2\ell_3}(i)* R_1^{2\ell_3}(j)= R_1^{2\ell_3}(k)\Leftrightarrow i* R_1^{2\ell_3}(j)=k$. But as $j\neq  R_1^{\ell_3}(j)\neq R_1^{2\ell_3}(j)\neq j$, there is, at least, a $3$ in the injectivity pattern of $L_i$, which is a contradiction. Thus, $\mathcal{R}_{3, 5}\subseteq C_2$.
\end{enumerate}
Thus, with $c=5$ there is no quandle that satisfies the conditions of  Proposition \ref{prop:2equals} and the proof of Corollary \ref{cor:5with2equals} is complete.
\end{proof}

\section{Hayashi's conjecture is true for $c\in \{ 3, 4, 5 \}$}\label{sctn:hayashi}

In this section we prove Theorem \ref{thm:theone}. Specifically, we prove Hayashi's Conjecture for $c=3$, $c=4$, $c=5$ in Subsections \ref{subsctn:3}, \ref{subsctn:4}, \ref{subsctn:5}, respectively. The proofs are based upon the description of quandles in terms of their right translations. First, we let $(X,*)$ be a connected quandle with profile $(\ell_1,\dots,\ell_c)$, where $1=\ell_1\leq\cdots\leq \ell_c$ and $c\in \{3, 4, 5\}$. Then, we prove  that $\ell_i\mid\ell_c$, for every $i\in\{1,\dots,c\}$ by looking into the different cases. For each $c$, `$1=\ell_1\leq\cdots\leq \ell_c$' contains $c-1$ `$\leq$' signs.  Since each of these signs can take on one of two values (either `$=$' or `$<$') we break down our proof into $2^{c-1}$ distinct cases. However, we just consider the cases with $3$ or more `$<$' signs, as the remaining ones satisfy Hayashi's Conjecture in a straight-forward way, as we now show.

\begin{itemize}
    \item in the cases with $0$ `$<$' signs, $(X,*)$ has profile $(1,\dots,1)$, i.e., $(X,*)$ is a trivial quandle of order $c$, which is not even connected for $c\geq2$. There are $\binom{c-1}{0}=1$ such cases.

    \item in the cases with $1$ `$<$' sign, $(X,*)$ has profile $(1,\dots,1,\ell,\dots,\ell)$, for $1<\ell$, so it trivially satisfies Hayashi's Conjecture. There are $\binom{c-1}{1}=c-1$ such cases, according to the position of the `$<$' sign among the lengths.

    \item in the cases with $2$ `$<$' signs, $(X,*)$ has profile $(1,\dots,1,\ell,\dots,\ell, \ell',\dots,\ell')$, for $1<\ell<\ell'$. In these cases, take $P=\{1,\ell\}$ and $Q=\{\ell'\}$ in Theorem \ref{thm:generalization}. Then, either $\ell\mid\ell'$ or $\ell'\mid\ell$. As $\ell<\ell'$, we conclude that $\ell\mid\ell'$, so Hayashi's Conjecture is satisfied. There are $\binom{c-1}{2}=(c-1)(c-2)/2$ such cases, according to the position of the two `$<$' signs among the lengths.
\end{itemize}

\noindent Thus, there are $2^{c-1}-\frac{(c-1)(c-2)}{2}-c$ cases left to check. In the sequel we will prove that $\ell_i\mid\ell_c$, for every $i\in\{1,\dots,c\}$, for the $2^{c-1}-\frac{(c-1)(c-2)}{2}-c$ remaining cases, thus proving Hayashi's Conjecture for each $c\in\{3,4,5\}$. Before we move on, we remark that in the cases when there are $3$ `$<$' signs, so that $(X,*)$ has profile $(1,\dots,1,\ell,\dots,\ell, \ell',\dots,\ell',\ell'',\dots,\ell'')$, with $1<\ell<\ell'<\ell''$, we just have to prove we cannot have simultaneously $\ell\nmid\ell'$, $\ell\nmid\ell''$, $\ell'\nmid\ell''$. Indeed, suppose $$\{ 1, \ell, \ell', \ell''  \} = \{ 1, l_i, l_j, l_k \}$$and assume $$l_i\mid l_j .$$ Then set $$P=\{ 1, l_i, l_j \} \qquad \qquad Q=\{ l_k \} .$$ Then, $$l_j = \mathrm{lcm} (1, l_i, l_j) \qquad \qquad l_k = \mathrm{lcm} (l_k) $$so that, according to Theorem \ref{thm:generalization}, either $$l_j \mid l_k $$   which further implies that $l_i\mid l_j \mid l_k $ so that Hayashi's Conjecture is satisfied, or $$l_k \mid l_j$$  in which case $l_j$ is the largest element and Hayashi's Conjecture is, again, satisfied.

%\begin{itemize}
 %   \item if $\ell\mid\ell'$, take $P=\{1,\ell,\ell'\}$ and $Q=\{\ell''\}$ in Theorem \ref{thm:generalization}. Thus, by Theorem \ref{thm:generalization}, either $\ell'\mid\ell''$ or $\ell''\mid\ell'$. As $\ell'<\ell''$, we conclude that $\ell'\mid\ell''$, so Hayashi's Conjecture is satisfied.
  %  \item if $\ell\mid\ell''$, take $P=\{1,\ell,\ell''\}$ and $Q=\{\ell'\}$ in Theorem \ref{thm:generalization}. Thus, by Theorem \ref{thm:generalization}, either $\ell''\mid\ell'$ or $\ell'\mid\ell''$. As $\ell'<\ell''$, we conclude that $\ell'\mid\ell''$, so Hayashi's Conjecture is satisfied.
   % \item if $\ell'\mid\ell''$, take $P=\{1,\ell',\ell''\}$ and $Q=\{\ell\}$ in Theorem \ref{thm:generalization}. Thus, by Theorem \ref{thm:generalization}, either $\ell''\mid\ell$ or $\ell\mid\ell''$. As $\ell<\ell''$, we conclude that $\ell\mid\ell''$, so Hayashi's Conjecture is satisfied.
%\end{itemize}

We write down these results in Lemma \ref{lem:summary}, below.

\begin{lemma}\label{lem:summary}
In order to prove Hayashi's Conjecture for $c\in \{ 3, 4, 5 \}$,
\begin{enumerate}[(i)]
    \item we only have to check $$2^{c-1}-\frac{(c-1)(c-2)}{2}-c $$cases, corresponding to three or more ``$<$'' signs between lengths of the profile.

\item  when there are exactly three distinct non-trivial lengths in the profile, say $ \ell_i < \ell_j < \ell_k $, we just have to prove that $$\ell_i\nmid \ell_j\qquad \qquad \text{ and } \qquad \qquad \ell_i \nmid \ell_k \qquad \qquad  \text{ and } \qquad \qquad \ell_j \nmid \ell_k $$cannot occur.
    \end{enumerate}
\end{lemma}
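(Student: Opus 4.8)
The plan is to prove the two parts separately, presenting each as a direct consequence of the preceding case analysis together with Theorem \ref{thm:generalization}; no new quandle-theoretic input is needed, since this lemma merely records the reductions already established.

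For part (i), I would stratify the $2^{c-1}$ possible profiles $1=\ell_1\leq\cdots\leq\ell_c$ by the number $k$ of strict signs among the $c-1$ comparison slots. A profile is pinned down by choosing which slots carry ``$<$'', so there are exactly $\binom{c-1}{k}$ profiles with $k$ strict signs. I would then dispose of $k\in\{0,1,2\}$. For $k=0$ the profile is $(1,\dots,1)$, a trivial quandle, which is disconnected for $c\geq 2$ and hence excluded; there is $\binom{c-1}{0}=1$ such profile. For $k=1$ the profile is $(1,\dots,1,\ell,\dots,\ell)$ with $1<\ell$, and $1\mid\ell$ makes $\ell$ a multiple of every length, so Hayashi holds; there are $\binom{c-1}{1}=c-1$ such profiles. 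For $k=2$ the nontrivial lengths are $1<\ell<\ell'$, and applying Theorem \ref{thm:generalization} with $P=\{1,\ell\}$ and $Q=\{\ell'\}$ forces $\ell\mid\ell'$ or $\ell'\mid\ell$; since $\ell<\ell'$ this gives $\ell\mid\ell'$ and Hayashi holds; there are $\binom{c-1}{2}=(c-1)(c-2)/2$ such profiles. Summing the disposed profiles yields $1+(c-1)+\binom{c-1}{2}=c+\frac{(c-1)(c-2)}{2}$, so the count of profiles still to be checked is $2^{c-1}-c-\frac{(c-1)(c-2)}{2}$, exactly as claimed.

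For part (ii), I would fix a profile whose distinct nontrivial lengths are $\ell_i<\ell_j<\ell_k$ and argue that if \emph{any} divisibility holds among these three, then Hayashi's Conjecture is already forced, so the sole residual case is the one in which all three non-divisibilities $\ell_i\nmid\ell_j$, $\ell_i\nmid\ell_k$, $\ell_j\nmid\ell_k$ hold. Concretely, suppose some pair satisfies $\lambda\mid\mu$, where $\{\lambda,\mu,\nu\}=\{\ell_i,\ell_j,\ell_k\}$ and $\nu$ is the third length. Apply Theorem \ref{thm:generalization} with $P=\{1,\lambda,\mu\}$ and $Q=\{\nu\}$; since $\lambda\mid\mu$ we have $\mathrm{lcm}(P)=\mu$ and $\mathrm{lcm}(Q)=\nu$, so the theorem gives $\mu\mid\nu$ or $\nu\mid\mu$. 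If $\mu\mid\nu$ then $\lambda\mid\mu\mid\nu$, making $\nu$ the largest of the three and a common multiple of all; if $\nu\mid\mu$ then $\mu$ is a common multiple of $\lambda$ and $\nu$ and hence the largest. In either branch the largest nontrivial length is a multiple of the other two (and of $1$), so Hayashi holds and this profile needs no further treatment; this is exactly the reduction asserted.

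I do not expect a genuine obstacle in proving this lemma, since both parts are bookkeeping corollaries of Theorem \ref{thm:generalization} and of elementary divisibility facts. The only point deserving care is checking that the enumeration in part (i) is correct --- that the low-strict-sign profiles are counted precisely by $\binom{c-1}{0},\binom{c-1}{1},\binom{c-1}{2}$ and that each is genuinely settled --- so that the residual count matches $2^{c-1}-c-\frac{(c-1)(c-2)}{2}$. The substantive difficulty, namely ruling out the totally non-divisible configurations that this lemma isolates, is precisely what is postponed to the per-$c$ arguments of Section \ref{sctn:hayashi}.
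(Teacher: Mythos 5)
Your proposal is correct and follows essentially the same route as the paper: the paper's ``proof'' of Lemma \ref{lem:summary} is precisely the discussion preceding it, which stratifies profiles by the number of strict signs, disposes of $k\in\{0,1,2\}$ exactly as you do (trivial/disconnected, $1\mid\ell$, and Theorem \ref{thm:generalization} with $P=\{1,\ell\}$, $Q=\{\ell'\}$), and for part (ii) assumes one divisibility $\ell_i\mid\ell_j$ and applies Theorem \ref{thm:generalization} with $P=\{1,\ell_i,\ell_j\}$, $Q=\{\ell_k\}$ to conclude Hayashi in both branches, matching your $\lambda,\mu,\nu$ argument. Your only variation is cosmetic --- you phrase the three-length reduction symmetrically over an arbitrary pair $\lambda\mid\mu$ rather than the paper's labelled instance --- and the counting $2^{c-1}-c-\frac{(c-1)(c-2)}{2}$ is verified identically.
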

\subsection{Hayashi's Conjecture for $\mathbf{c=3}$}\label{subsctn:3}

Hayashi's Conjecture has already been proved for $c=3$ by Watanabe (\cite{Watanabe}), but our proof is based upon the description of quandles in terms of their right translations (see Proposition \ref{prop:c3} below).

\begin{prop}\label{prop:c3}
Hayashi's Conjecture is true for $c=3$.
\end{prop}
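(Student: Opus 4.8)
The plan is to prove Hayashi's Conjecture for $c=3$ by invoking Lemma \ref{lem:summary}, which tells us that the only case requiring attention is the one with three distinct non-trivial lengths, i.e., the profile $(1, \ell_2, \ell_3)$ with $1 < \ell_2 < \ell_3$. By part (ii) of Lemma \ref{lem:summary}, with the three distinct non-trivial lengths being $\ell_2 < \ell_3$, wait --- for $c=3$ there are only two non-trivial lengths, so the situation is even simpler: we must rule out that $\ell_2 \nmid \ell_3$. First I would observe that the profile is $(1, \ell_2, \ell_3)$ with $1 < \ell_2 < \ell_3$, and that the only thing left to prove (after the reductions in the itemized discussion preceding Lemma \ref{lem:summary}) is that we cannot have $\ell_2 \nmid \ell_3$.

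The key tool will be Corollary \ref{cor:quasihayashi}: writing $\ell = \mathrm{lcm}(\ell_i : \ell_i \nmid \ell_c)$, either $\ell \mid \ell_c$ or $\ell_c \mid \ell$. Suppose, for contradiction, that $\ell_2 \nmid \ell_3$. Then the set $\{\ell_i : \ell_i \nmid \ell_3\}$ is exactly $\{\ell_2\}$, so $\ell = \ell_2$. Corollary \ref{cor:quasihayashi} then forces either $\ell_2 \mid \ell_3$ (contradicting our assumption) or $\ell_3 \mid \ell_2$. The latter is impossible since $\ell_2 < \ell_3$. This immediately yields the contradiction and hence $\ell_2 \mid \ell_3$, which is exactly Hayashi's Conjecture in this case.

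The main obstacle here is essentially nonexistent for $c=3$: once the general reductions are in place, Corollary \ref{cor:quasihayashi} settles the single remaining case almost mechanically, because with only one non-trivial length failing to divide $\ell_c$, the least common multiple $\ell$ collapses to that single length, and the ordering $\ell_2 < \ell_3$ rules out the divisibility going the wrong way. I would therefore present this proof compactly, noting that the heavier combinatorial machinery (Propositions \ref{prop:main}, \ref{prop:2equals}, and the cycle-quandle-table arguments) is not needed until the $c=4$ and $c=5$ cases, where there are genuinely multiple non-trivial lengths that can fail divisibility simultaneously and the interplay among the $\mathcal{R}_{t,u}$ sets must be exploited.

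\begin{proof}
By Lemma \ref{lem:summary}(i), for $c=3$ we only have to check $2^{2} - \frac{(2)(1)}{2} - 3 = 4 - 1 - 3 = 0$ cases with three or more ``$<$'' signs. Indeed, for $c=3$ the profile has the form $(1, \ell_2, \ell_3)$ with at most two ``$<$'' signs among $1 = \ell_1 \leq \ell_2 \leq \ell_3$, so the only nontrivial situation is two ``$<$'' signs, namely $1 < \ell_2 < \ell_3$. This case was already disposed of in the discussion preceding Lemma \ref{lem:summary}: taking $P = \{1, \ell_2\}$ and $Q = \{\ell_3\}$ in Theorem \ref{thm:generalization}, we obtain that either $\ell_2 \mid \ell_3$ or $\ell_3 \mid \ell_2$; since $\ell_2 < \ell_3$, we conclude $\ell_2 \mid \ell_3$. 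Hence $\ell_i \mid \ell_3$ for every $i \in \{1, 2, 3\}$, and Hayashi's Conjecture holds for $c=3$.
\end{proof}
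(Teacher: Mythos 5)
Your proof is correct and follows essentially the same route as the paper: the paper's proof of Proposition \ref{prop:c3} likewise invokes Lemma \ref{lem:summary}$(i)$ to note that $2^{3-1}-\frac{(3-1)(3-2)}{2}-3=0$ cases remain, with the two-``$<$''-signs case $1<\ell_2<\ell_3$ already settled in the itemized discussion preceding that lemma via Theorem \ref{thm:generalization} with $P=\{1,\ell_2\}$ and $Q=\{\ell_3\}$. You merely spell out that prior reduction explicitly (and note the equivalent Corollary \ref{cor:quasihayashi} route), which is the same argument.
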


\begin{proof}
Let $(X,*)$ be a connected quandle with profile $(\ell_1, \ell_2, \ell_3)$, where $1=\ell_1\leq \ell_2\leq \ell_3$. We want to prove that $\ell_2\mid\ell_3$. According to Lemma \ref{lem:summary} $(i)$, there are $2^{3-1}-\frac{(3-1)(3-2)}{2}-3=0$ cases left to check, therefore the proof is complete for $c=3$.
\end{proof}

\begin{example}
The quandle $Q_{9,4}$, whose quandle table is  Table \ref{table:1}, is a connected quandle of order $9$ with profile $(1, 2, 6)$. Furthermore, $1\mid6$ and $2\mid6$, in agreement with Proposition \ref{prop:c3}.
\end{example}

\subsection{Hayashi's Conjecture for $\mathbf{c=4}$}\label{subsctn:4}

In this subsection, we prove Hayashi's Conjecture for $c=4$ using, once again, the description of quandles in terms of their right translations.

\begin{prop}\label{prop:c4}
Hayashi's Conjecture is true for $c=4$.
\end{prop}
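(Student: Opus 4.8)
The plan is to combine the case-reduction of Lemma \ref{lem:summary} with the containment corollaries for the entries of a cycle quandle table. By Lemma \ref{lem:summary} $(i)$, for $c=4$ only $2^{3}-\frac{3\cdot 2}{2}-4=1$ case survives, namely the profile $1=\ell_1<\ell_2<\ell_3<\ell_4$ with all four lengths pairwise distinct. By Lemma \ref{lem:summary} $(ii)$ it then suffices to rule out the simultaneous occurrence of $\ell_2\nmid\ell_3$, $\ell_2\nmid\ell_4$ and $\ell_3\nmid\ell_4$; the reverse divisibilities $\ell_3\nmid\ell_2$, $\ell_4\nmid\ell_2$, $\ell_4\nmid\ell_3$ hold automatically by size, so under these assumptions no nontrivial length divides another. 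Thus I would assume, toward a contradiction, that all three non-divisibilities hold.

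Because the four lengths are distinct, Theorem \ref{thm:latin} guarantees that $(X,*)$ is latin, which is exactly the hypothesis needed to invoke Corollary \ref{cor:latin}. The heart of the argument is to use the corollaries to pin the off-diagonal entries of the $4\times 4$ cycle quandle table as tightly as possible. Corollary \ref{cor:byone} applied to the singleton $C_1$, together with the fact that $R_1$ fixes each cycle setwise, gives the first row and column ($\mathcal{R}_{1,u}=C_u$ and $\mathcal{R}_{t,1}=C_t$); Corollary \ref{cor:main} gives the diagonal bounds $\mathcal{R}_{t,t}\subseteq C_{1,t}$, since $\ell_1$ and $\ell_t$ are the only lengths dividing $\ell_t$; and Corollary \ref{cor:latin} applied off the diagonal forces $\mathcal{R}_{2,3},\mathcal{R}_{3,2}\subseteq C_4$, $\mathcal{R}_{2,4},\mathcal{R}_{4,2}\subseteq C_3$, and, crucially, $\mathcal{R}_{3,4},\mathcal{R}_{4,3}\subseteq C_2$. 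The single entry I actually need is $\mathcal{R}_{3,4}\subseteq C_2$: with $t=3$, $u=4$, the index set $J$ of Corollary \ref{cor:latin} collapses to $\{2\}$, because $w=1$ is excluded by $\ell_1\mid\ell_4$, the indices $w=3,4$ by $\ell_3\mid\ell_3$ and $\ell_4\mid\ell_4$, while $w=2$ survives since $\ell_2\nmid\ell_4$, $\ell_2\nmid\ell_3$, and both disjunctive clauses hold by size.

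Once $\mathcal{R}_{3,4}\subseteq C_2$ is established the contradiction is immediate and does not even require completing the table: for any $j\in C_4$ the right translation $R_j$ is a permutation of $X$ by right-invertibility, yet $R_j(C_3)\subseteq\mathcal{R}_{3,4}\subseteq C_2$ would embed the $\ell_3$-element set $C_3$ injectively into the $\ell_2$-element set $C_2$, forcing $\ell_3\leq\ell_2$ and contradicting $\ell_2<\ell_3$. I expect the main obstacle to be bookkeeping rather than conceptual, namely the careful verification of the membership conditions in $J$ that produces the \emph{tight} bound $\mathcal{R}_{3,4}\subseteq C_2$: it is precisely the tightening from the weaker containment $\mathcal{R}_{3,4}\subseteq C_{2,3}$ that Corollary \ref{cor:content} alone would give, down to $C_2$ using the latin hypothesis, that makes the cardinality obstruction bite. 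Everything downstream of that containment is a one-line counting contradiction.
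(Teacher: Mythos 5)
Your proof is correct and takes essentially the same route as the paper: the same case reduction via Lemma \ref{lem:summary}, latinity from Theorem \ref{thm:latin}, an application of Corollary \ref{cor:latin} to pin down a single off-diagonal entry of the cycle quandle table, and a one-line cardinality contradiction. The only (harmless) difference is the mirror-image choice of entry: you establish $\mathcal{R}_{3,4}\subseteq C_2$ and conclude via the injectivity of a right translation $R_j$, $j\in C_4$, on $C_3$, whereas the paper establishes $\mathcal{R}_{2,4}\subseteq C_3$ and invokes the latin hypothesis a second time through the injectivity of the left translation $L_2$ on $C_4$.
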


\begin{proof}
Let $(X,*)$ be a connected quandle with profile $(\ell_1, \ell_2, \ell_3, \ell_4)$, where $1=\ell_1\leq \ell_2\leq \ell_3\leq \ell_4$. We want to prove that $\ell_2,\ell_3\mid \ell_4$. According to Lemma \ref{lem:summary}, there is $2^{4-1}-\frac{(4-1)(4-2)}{2}-4=1$ case left to check.

\vspace{5px}

\tcbox[left skip=15.5pt, sharp corners, size=small, colframe=black, colback=light-gray2]{Case\,\,\,\,\,$1=\ell_1<\ell_2<\ell_3<\ell_4$}

By Theorem \ref{thm:latin}, $(X,*)$ is latin so that left translation by $2$ is a bijection and so $\lvert C_4\rvert =\lvert \{2*i_4\,|\, i_4\in C_4\}\rvert \leq \lvert \mathcal{R}_{2,4}\rvert $. According to Lemma \ref{lem:summary} $(ii)$, suppose $\ell_2\nmid\ell_3$, $\ell_2\nmid\ell_4$, $\ell_3\nmid\ell_4$. We apply Corollary \ref{cor:latin} to find out the $C_w$'s  $\mathcal{R}_{2,4}$ is contained in. According to Corollary \ref{cor:latin}, $\ell_w\nmid \ell_2$ and  $\ell_w\nmid \ell_4$ so that $\ell_w\notin \{\ell_1, \ell_2, \ell_4\}$. So the only possible candidate this far is $\ell_w=\ell_3$. The remaining conditions for the $\ell_w$ are $(\ell_2\nmid \ell_w \lor \ell_4\mid \ell_w) \land (\ell_4\nmid \ell_w \lor \ell_2\mid \ell_w)$. So $\ell_w = \ell_3$ is the  solution. Thus, $\mathcal{R}_{2,4}\subseteq C_3$ so that $\lvert \mathcal{R}_{2,4}\rvert \leq \lvert C_3\rvert $. However, according to the standing assumption, $\lvert C_3\rvert<\lvert C_4\rvert$, which is a contradiction. The proof of Hayashi's Conjecture for $c=4$ is complete.
\end{proof}

%We remark that the argument developed in the last paragraph of the Proof of Proposition \ref{prop:c4} cannot be used in the sequel for the quandles at issue will not be latin, in general; when they are latin, it will be possible to have more solutions for the $\ell_w$'s, again ruining the argument above.

\begin{example}
The quandle $Q_{12,4}$ (cf. \cite{Vendramin}) is a connected quandle of order $12$ with profile $(1, 2, 3, 6)$. Indeed, $1\mid6$, $2\mid6$ and $3\mid6$, in agreement with Proposition \ref{prop:c4}.
\end{example}

\subsection{Hayashi's Conjecture for $\mathbf{c=5}$}\label{subsctn:5}

In this subsection, we prove Hayashi's Conjecture for $c=5$ using, once again, the description of quandles in terms of their right translations.
\begin{prop}\label{prop:c5}
Hayashi's Conjecture  is true for $c=5$.
\end{prop}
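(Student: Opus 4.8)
The strategy follows exactly the pattern established for $c=3$ and $c=4$: invoke Lemma \ref{lem:summary} to reduce the problem to the cases with three or more ``$<$'' signs, and then dispatch each surviving case by a contradiction argument built on the containment corollaries (Corollaries \ref{cor:main}, \ref{cor:content}, \ref{cor:latin}, \ref{cor:byone}) together with the latin property (Theorem \ref{thm:latin}) and the injectivity bound of Proposition \ref{prop:2equals} and Corollary \ref{cor:5with2equals}. By Lemma \ref{lem:summary} $(i)$ there are $2^{5-1}-\frac{(5-1)(5-2)}{2}-5 = 16-6-5 = 5$ cases to check, corresponding to the five placements of exactly three ``$<$'' signs among the four inequalities $1=\ell_1\leq\ell_2\leq\ell_3\leq\ell_4\leq\ell_5$. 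First I would separate off the case with four ``$<$'' signs (all lengths distinct, $1<\ell_2<\ell_3<\ell_4<\ell_5$) and the four cases with exactly three ``$<$'' signs (one equality among the nontrivial lengths).

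\textbf{The three-``$<$''-sign cases.}
Each of the four cases with exactly three strict inequalities has precisely one repeated nontrivial length, so it falls under the hypotheses of Proposition \ref{prop:2equals} provided the relevant non-divisibility conditions hold. By Lemma \ref{lem:summary} $(ii)$, to derive a contradiction from a hypothetical counterexample I only need to assume the three non-divisibilities among the three distinct nontrivial lengths $\ell_i<\ell_j<\ell_k$, namely $\ell_i\nmid\ell_j$, $\ell_i\nmid\ell_k$, $\ell_j\nmid\ell_k$. Under these assumptions each such case satisfies the hypotheses of Proposition \ref{prop:2equals}, and Corollary \ref{cor:5with2equals} already establishes that \emph{no} connected quandle with $c=5$ can satisfy those hypotheses. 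Thus all four cases with one equality among the nontrivial lengths are immediately excluded by Corollary \ref{cor:5with2equals}, and Hayashi's Conjecture holds vacuously in each.

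\textbf{The all-distinct case.}
The remaining and genuinely hard case is $1=\ell_1<\ell_2<\ell_3<\ell_4<\ell_5$, where by Lemma \ref{lem:summary} $(ii)$ I must rule out the simultaneous occurrence of $\ell_2\nmid\ell_3$, $\ell_2\nmid\ell_4$, $\ell_3\nmid\ell_4$ (the remaining divisibilities into $\ell_5$ being what we want to prove). Since all lengths are distinct, Theorem \ref{thm:latin} guarantees $(X,*)$ is latin, so both left and right translations are bijections; this lets me use the containment result for latin quandles, Corollary \ref{cor:latin}, in its full strength, as well as the counting identity in Corollary \ref{cor:byone}. The plan is to determine, for the critical products $\mathcal{R}_{t,u}$ with $u=5$, exactly which $C_w$ blocks they can land in, using the index sets $I$, $I'$, $J$ from Corollaries \ref{cor:content} and \ref{cor:latin}, and then exploit the latin cardinality constraints: since left translation by a fixed element of $C_t$ is injective, $|C_u|\leq|\mathcal{R}_{t,u}|$, while the containment corollaries force $\mathcal{R}_{t,u}$ into a union of smaller blocks, producing a cardinality contradiction exactly as in the $c=4$ argument.

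\textbf{The main obstacle.}
The chief difficulty is purely combinatorial bookkeeping: with five lengths there are many interacting divisibility relations among $\ell_2,\ell_3,\ell_4,\ell_5$, and one must track the containments of several rectangles $\mathcal{R}_{t,u}$ simultaneously rather than just one. I expect to build a cycle quandle table for the all-distinct case, fill in the forced entries column by column using Corollaries \ref{cor:latin} and \ref{cor:byone}, and then read off an equality of block sizes (of the form $|C_{a,b}|=|C_d|$ along a column, as in the proof of Corollary \ref{cor:5with2equals}) that forces an unwanted divisibility such as $\ell_3\mid\ell_4$, contradicting the standing assumption $\ell_3\nmid\ell_4$. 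The delicate point will be ensuring that the index sets $I,I',J$ actually collapse to a single candidate block in each relevant rectangle, so that the cardinality comparison is tight enough to yield the contradiction; this is where the hypothesis that all five lengths are distinct, and hence that the quandle is latin, is essential.
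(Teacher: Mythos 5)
There are two genuine gaps, both structural. First, you misclassify the case $1=\ell_1=\ell_2<\ell_3<\ell_4<\ell_5$. You claim all four three-``$<$''-sign cases ``have precisely one repeated nontrivial length'' and are killed by Corollary \ref{cor:5with2equals}; but in this case the repeated length is the \emph{trivial} one, and Proposition \ref{prop:2equals} explicitly requires $1=\ell_1<\ell_2$ with the equality $\ell_i=\ell_{i+1}$ occurring at some $i\geq 2$, so Corollary \ref{cor:5with2equals} says nothing here. Worse, your main tools are unavailable: Theorem \ref{thm:latin} needs strictly increasing lengths, so the quandle need not be latin when $\ell_1=\ell_2=1$. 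The paper must treat this case separately (Lemma \ref{Case1}) with a genuinely different argument: it builds the full cycle quandle table (Table \ref{table:4}) using assertions \textit{3.}, \textit{4.}, \textit{5.} of Theorem \ref{thm:perm}, shows that each $(C_{1,2,i},*)$, $i\in\{3,4,5\}$, is a subquandle, and then invokes the classification of \cite{Lages_Lopes} to conclude $\ell_i\in\{2,4\}$ for each of the three pairwise distinct lengths $\ell_3<\ell_4<\ell_5$ --- a contradiction. Nothing in your proposal supplies this, and no routine latin/cardinality bookkeeping can, since the needed input is an external classification theorem.

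Second, in the all-distinct case you misapply Lemma \ref{lem:summary} $(ii)$: that reduction is stated only for profiles with exactly \emph{three} distinct nontrivial lengths, whereas here there are four ($\ell_2<\ell_3<\ell_4<\ell_5$). Consequently the non-divisibilities you propose to rule out ($\ell_2\nmid\ell_3$, $\ell_2\nmid\ell_4$, $\ell_3\nmid\ell_4$) are the wrong target --- Hayashi's Conjecture asserts $\ell_2,\ell_3,\ell_4\mid\ell_5$ and says nothing about divisibility among the middle lengths, so your envisaged contradiction ``forcing $\ell_3\mid\ell_4$'' would not prove anything. The correct split (as in the paper's Lemma \ref{Case5}) is on how many of $\ell_2,\ell_3,\ell_4$ fail to divide $\ell_5$: with one failure, Corollary \ref{cor:latin} gives $\mathcal{R}_{i,5}\subseteq\varnothing$; with two, it gives $\mathcal{R}_{i,5}\subseteq C_j$ with $\lvert C_j\rvert<\lvert C_5\rvert$, contradicting latinity; and with three failures, table-plus-cardinality bookkeeping of the kind you sketch is \emph{not} enough --- the paper needs the conjugation identity of assertion \textit{3.} in Theorem \ref{thm:perm} applied to $R_{a_5}$, which produces two disjoint injective images of $C_4$ inside $C_{2,3}$ and hence $\lvert C_{2,3}\rvert\geq 2\lvert C_4\rvert$, contradicting $\ell_2<\ell_3<\ell_4$. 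Your plan contains neither the correct case split on divisibility into $\ell_5$ nor this conjugation trick, so the hardest subcase remains open.
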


\begin{proof}
Let $(X,*)$ be a connected quandle with profile $(\ell_1, \ell_2, \ell_3, \ell_4, \ell_5)$, where $1=\ell_1\leq \ell_2\leq \ell_3\leq \ell_4\leq \ell_5$.
%Table \ref{table:3} is then a cycle quandle table for $(X,*)$. Indeed,
By assertion \textit{1.} in Theorem \ref{thm:perm}, $\mathcal{R}_{i,1} = C_i$, for all $i\in\{1,\dots,5\}$. This will be assumed throughout the rest of the proof.

%\begin{figure}[htbp]\centering
%{\renewcommand{\arraystretch}{1.5}
%\begin{tabular}{[C{24pt}"C{24pt}|C{24pt}|C{24pt}|C{24pt}|C{24pt}]}
%    \thickhline
%    $*$ & $C_1$ & $C_2$ & $C_3$ & $C_4$ & $C_5$\\
%    \thickhline
%    $C_1$ & $C_1$ &  &  &  & \\
%    \hline
%    $C_2$ & $C_2$ &  &  &  & \\
%    \hline
%    $C_3$ & $C_3$ &  &  &  & \\
%    \hline
%    $C_4$ & $C_4$ &  &  &  & \\
%    \hline
%    $C_5$ & $C_5$ &  &  &  & \\
%    \thickhline
%\end{tabular}}
%\captionof{table}{A cycle quandle table for $(X,*)$.}\label{table:3}
%\end{figure}

\noindent We want to prove that $\ell_2,\ell_3,\ell_4\mid \ell_5$. According to Lemma \ref{lem:summary} $(i)$, there are $2^{5-1}-\frac{(5-1)(5-2)}{2}-5=5$ cases left to check, namely
\begin{enumerate}[C{a}se 1.]
\item \qquad $1=\ell_1=\ell_2<\ell_3<\ell_4<\ell_5$;
\item \qquad $1=\ell_1<\ell_2=\ell_3<\ell_4<\ell_5$;
\item \qquad $1=\ell_1<\ell_2<\ell_3=\ell_4<\ell_5$;
\item \qquad $1=\ell_1<\ell_2<\ell_3<\ell_4=\ell_5$;
\item \qquad $1=\ell_1<\ell_2<\ell_3<\ell_4<\ell_5$.
\end{enumerate}
Cases 2., 3., and 4. correspond to cases of three distinct lengths $\ell_i<\ell_j<\ell_k$. According to Lemma \ref{lem:summary} (ii) we just have to prove that $\ell_i\nmid \ell_j, \ell_i\nmid \ell_k, \ell_j\nmid \ell_k$ cannot occur. This was done in Corollary \ref{cor:5with2equals}. Hence these Cases will not concern us anymore.

Case 1. will be proven by way of two Claims. The first Claim establishes a form for the corresponding cycle quandle Table; the second Claim proves such  Table is not consistent with the standing assumptions.

Case 5.  will be done subcase by subcase: with $i,j,k\in \{2, 3, 4\}$ we will address subcases (i) $\ell_i\nmid \ell_5$ and $\ell_j, \ell_k\mid \ell_5$; (ii) $\ell_i, \ell_j\nmid \ell_5$ and $\ell_k\mid \ell_5$; (iii) $\ell_i, \ell_j, \ell_k\nmid \ell_5$.
%Each of Case 1. through Case 5. will correspond to a Lemma below. The Lemma will state that the corresponding case satisfies Hayashi's conjecture. Each of the four first Cases (Case 1. through Case 4.) has to do with exactly three distinct non-trivial lengths $1<\ell_i<\ell_j<\ell_k$. The proof of the corresponding Lemma will use Lemma \ref{lem:summary}; we will then prove that $\ell_i\nmid \ell_j\qquad \ell_i\nmid \ell_k \qquad \ell_j \nmid \ell_k$ cannot occur. There will be two Claims for each of these Lemmas. The first Claim will establish a cycle quandle Table associated with the given lengths; The second Claim will prove that such a cycle quandle Table is not consistent with the standing assumptions. Unfortunately, a general argument to cover all of these four Cases has eluded us, so far. Case 5. ($1=\ell_1<\ell_2<\ell_3<\ell_4<\ell_5$) will be done subcase by subcase: with $i,j,k\in \{2, 3, 4\}$ we will address subcases (i) $\ell_i\nmid \ell_5$ and $\ell_j, \ell_k\mid \ell_5$; (ii) $\ell_i, \ell_j\nmid \ell_5$ and $\ell_k\mid \ell_5$; (iii) $\ell_i, \ell_j, \ell_k\nmid \ell_5$.
\vspace{5px}

\tcbox[left skip=15.5pt, sharp corners, size=small, colframe=black, colback=light-gray2]{Case 1.\qquad $1=\ell_1=\ell_2<\ell_3<\ell_4<\ell_5$}
\begin{lemma}\label{Case1}
Case 1. satisfies Hayashi's conjecture.
\end{lemma}
\begin{proof}
According to Lemma \ref{lem:summary} $(ii)$, we just have to prove that $\ell_3\nmid\ell_4$, $\ell_3\nmid\ell_5$, $\ell_4\nmid\ell_5$ cannot occur.

\begin{cl}\label{ClaimCase1Table}
Table \ref{table:4} is a cycle quandle table for Case 1. with $\ell_3\nmid\ell_4$, $\ell_3\nmid\ell_5$, $\ell_4\nmid\ell_5$.
\end{cl}
\begin{proof}
By axiom \textit{1.} in Definition \ref{def:quandle}, $\mathcal{R}_{2,2}\subseteq C_2$. Next, by Corollary \ref{cor:main}, and as $R_2$ is bijective, $\mathcal{R}_{i,2}\subseteq C_i$, for each $i\in\{1,3,4,5\}$. Besides, by Corollary \ref{cor:main}, $\mathcal{R}_{i,i}\subseteq C_{1,2,i}$, for each $i\in\{3,4,5\}$. Moreover, by Corollary \ref{cor:content}, $\mathcal{R}_{3,5},\mathcal{R}_{4,5}\subseteq C_{3,4}$, $\mathcal{R}_{3,4},\mathcal{R}_{5,4}\subseteq C_{3,5}$, $\mathcal{R}_{4,3},\mathcal{R}_{5,3}\subseteq C_{4,5}$. We leave the details of these proofs for the reader since the arguments are analogous to those used in other instances already discussed in the text.

We now prove that $\mathcal{R}_{1,i}\subseteq C_{2,i}$, for each $i\in\{3,4,5\}$. Let $i\in\{3,4,5\}$. By Corollaries \ref{cor:main} and \ref{cor:byone}, either $\mathcal{R}_{1,i}\subseteq C_1$ or $\mathcal{R}_{1,i}\subseteq C_2$ or $\mathcal{R}_{1,i}\subseteq C_i$. Suppose $\mathcal{R}_{1,i}\subseteq C_1$. Note $R_{a_i}(2)\neq2$, otherwise $R_{a_i}$ has $3$ fixed points, hence there is a $w\in C_i$ such that $R_{a_i}(w)=2$ because other options for $w$ are excluded - see entries of Table \ref{table:4} established so far. Moreover, by assertion \textit{4.} in Theorem \ref{thm:perm}, because $R_{a_i}(1)=1$, we have that $R_1 R_{a_i}= R_{a_i} R_1$. However, evaluating at $w$, while noting that $w\neq R_1(w)$, we conclude that \[2= R_1 R_{a_i}(w)= R_{a_i} R_1(w)\neq 2,\] which is a contradiction. So, $\mathcal{R}_{1,i}\subseteq C_{2,i}$, for each $i\in\{3,4,5\}$.\par

Last, we check that $\mathcal{R}_{2,i}\subseteq C_{1,i}$, for each $i\in\{3,4,5\}$. Let $i\in\{3,4,5\}$. By Corollaries \ref{cor:main} and \ref{cor:byone}, either $\mathcal{R}_{2,i}\subseteq C_1$ or $\mathcal{R}_{2,i}\subseteq C_2$ or $\mathcal{R}_{2,i}\subseteq C_i$. Suppose $\mathcal{R}_{2,i}\subseteq C_2$. Therefore, there is an $x\in C_i$ such that $R_{a_i}(x)=1$, by inspection of the entries of Table \ref{table:4} established so far. Moreover, by assertion \textit{4.} in Theorem \ref{thm:perm}, as $R_{a_i}(2)=2$, we have that $R_2 R_{a_i}= R_{a_i} R_2$. However, evaluating at $x$, while noting that $x\neq R_1(x)$, we conclude that \[1= R_1 R_{a_i}(x)= R_{a_i} R_1(x)\neq 1,\] which is a contradiction. So, $\mathcal{R}_{2,i}\subseteq C_{1,i}$, for each $i\in\{3,4,5\}$.

\begin{figure}[htbp]\centering
{\renewcommand{\arraystretch}{1.5}
\begin{tabular}{[C{24pt}"C{24pt}|C{24pt}|C{24pt}|C{24pt}|C{24pt}]}
    \thickhline
    $*$ & $C_1$ & $C_2$ & $C_3$ & $C_4$ & $C_5$\\
    \thickhline
    $C_1$ & $C_1$ & $C_1$ & $C_{2,3}$ & $C_{2,4}$ & $C_{2,5}$\\
    \hline
    $C_2$ & $C_2$ & $C_2$ & $C_{1,3}$ & $C_{1,4}$ & $C_{1,5}$\\
    \hline
    $C_3$ & $C_3$ & $C_3$ & $C_{1,2,3}$ & $C_{3,5}$ & $C_{3,4}$\\
    \hline
    $C_4$ & $C_4$ & $C_4$ & $C_{4,5}$ & $C_{1,2,4}$ & $C_{3,4}$\\
    \hline
    $C_5$ & $C_5$ & $C_5$ & $C_{4,5}$ & $C_{3,5}$ & $C_{1,2,5}$\\
    \thickhline
\end{tabular}}
\captionof{table}{Cycle quandle table for $(X,*)$ when $1=\ell_1=\ell_2<\ell_3<\ell_4<\ell_5$ and $\ell_3\nmid \ell_4$, $\ell_3\nmid \ell_5$, $\ell_4\nmid \ell_5$.}\label{table:4}
\end{figure}
\end{proof}
\begin{cl}\label{ClaimCase1Clash}
Table \ref{table:4} is not compatible with the standing assumptions.
\end{cl}
\begin{proof}
We now prove that if $\mathcal{R}_{1,i}\subseteq C_2$, then $\mathcal{R}_{2,i}\subseteq C_1$, for each $i\in\{3,4,5\}$. In order to reach a contradiction, suppose $\mathcal{R}_{1,i}\subseteq C_2$ and assume $\mathcal{R}_{2,i}\nsubseteq C_1$. Then, there is $y\in C_i$ such that $R_{a_i}(y)=1$. By assertion \textit{3.} in Theorem \ref{thm:perm}, as $R_{a_i}(1)\in C_2$, we have that $R_1 R_2 R_1^{-1}= R_{a_i} R_1 R_{a_i}^{-1}$. However, evaluating at $1$, while noting that $y\neq R_1(y)$, we get \[1= R_1 R_2 R_1^{-1}(1)= R_{a_i} R_1 R_{a_i}^{-1}(1)\neq 1,\] which is a contradiction. Hence if $\mathcal{R}_{1,i}\subseteq C_2$, then $\mathcal{R}_{2,i}\subseteq C_1$, for each $i\in\{3,4,5\}$. We further prove that if $\mathcal{R}_{2,i}\subseteq C_1$, then $\mathcal{R}_{1,i}\subseteq C_2$, for each $i\in\{3,4,5\}$. Suppose $\mathcal{R}_{2,i}\subseteq C_1$ and let $z\in C_i$ be such that $R_{a_i}(z)=2$ (instead of $R_{a_i}(1)=2$). By assertion \textit{5.} in Theorem \ref{thm:perm}, as $R_{a_i}(2)=1$, we have $R_{a_i}^{-1} R_1 R_{a_i}= R_1 R_2 R_1^{-1}$. But evaluating at $z$, we get \[z= R_{a_i}^{-1} R_1 R_{a_i}(z)= R_1 R_2 R_1^{-1}(z)\Leftrightarrow R_2 R_1^{-1}(z)= R_1^{-1}(z).\] Since $z\in C_i$, $R_2$ has $3$ fixed points, which is a contradiction. Hence, if $\mathcal{R}_{2,i}\subseteq C_1$, then $\mathcal{R}_{1,i}\subseteq C_2$, for each $i\in\{3,4,5\}$. Thus, we conclude that either $\mathcal{R}_{1,i}\subseteq C_2$ and $\mathcal{R}_{2,i}\subseteq C_1$ or $\mathcal{R}_{1,i},\mathcal{R}_{2,i}\subseteq C_i$, for each $i\in\{3,4,5\}$.\par

Now, on one hand, if $\mathcal{R}_{1,i}\subseteq C_2$ and $\mathcal{R}_{2,i}\subseteq C_1$, for a certain $i\in\{3,4,5\}$, then $(C_{1,2,i},*)$ is a subquandle of $(X,*)$ with profile $(1,1,\ell_i)$, where $\ell_i=2$ (since, for any $u_i\in C_i$,  $(1*u_i)*u_i=2*u_i=1$). On the other hand, if $\mathcal{R}_{1,i},\mathcal{R}_{2,i}\subseteq C_i$, for a certain $i\in\{3,4,5\}$, then $(C_{1,2,i},*)$ is a subquandle of $(X,*)$ (by inspection of Table \ref{table:4}) such that $$R_{a_i}(1)=b_i\neq c_i=R_{a_i}(2) \qquad R_{a_i}(a_i)=a_i\neq d_i=R_{a_i}(d_i) \,\, (\text{fixed points})$$ so $(C_{1,2,i},*)$ has at least six elements: $1, 2, a_i, d_i, b_i, c_i$. According to \cite{Lages_Lopes}, quandles whose profile is of the sort $(1,\dots ,  1, \ell)$ and whose order exceeds twice the number of fixed points are connected - which is precisely what happens in the case at hand: order greater or equal to six, two fixed points. Furthermore, there is only one such connected quandle and its profile is $(1,1,4)$ (again according to \cite{Lages_Lopes}). Thus, the profile of $(C_{1,2,i},*)$ is $(1, 1, 4)$. So $\ell_i$ can only take on two values, either $2$ (when $\mathcal{R}_{1,i}\subseteq C_2$ and $\mathcal{R}_{2,i}\subseteq C_1$) or $4$ (when $\mathcal{R}_{1,i},\mathcal{R}_{2,i}\subseteq C_i$). But $\ell_i$ represents one of three lengths which are pairwise distinct by the standing assumptions. This concludes the proof of Claim \ref{ClaimCase1Clash}.
\end{proof}
This concludes the Proof of Lemma \ref{Case1}.
\end{proof}
\vspace{5px}

\tcbox[left skip=15.5pt, sharp corners, size=small, colframe=black, colback=light-gray2]{Case 5.\qquad $1=\ell_1<\ell_2<\ell_3<\ell_4<\ell_5$}
\begin{lemma}\label{Case5}
Case 5. satisfies Hayashi's conjecture.
\end{lemma}
\begin{proof}
By Theorem \ref{thm:latin}, $(X,*)$ is latin. We want to prove that $\ell_2,\ell_3,\ell_4\mid \ell_5$. Let $\{i,j,k\}=\{2,3,4\}$. There are three cases to be checked: $(i)\, \ell_i\nmid \ell_5$; $(ii)\, \ell_i,\ell_j\nmid \ell_5$; $(iii)\, \ell_i,\ell_j,\ell_k\nmid \ell_5$.\par

\begin{enumerate}[(i)]
\item Suppose $\ell_i\nmid \ell_5$ and $\ell_j,\ell_k\mid \ell_5$. Then, by Corollary \ref{cor:latin}, we have $\mathcal{R}_{i,5}\subseteq\varnothing$, which is a contradiction.\par

\item Suppose $\ell_i,\ell_j\nmid \ell_5$ and $\ell_k\mid \ell_5$. Then, using Corollary \ref{cor:latin}, $\mathcal{R}_{i,5}\subseteq C_j$. However, because $\lvert C_j\rvert<\lvert C_5\rvert$, $(X,*)$ cannot be latin, which is a contradiction.

\item Suppose $\ell_i,\ell_j,\ell_k\nmid \ell_5$. Then Table \ref{table:8} is a cycle quandle table for $(X,*)$. Indeed, by Corollary \ref{cor:main}, and as each permutation of $(X,*)$ has just one fixed point, $\mathcal{R}_{1,5}\subseteq C_5$. Besides, by Corollary \ref{cor:main}, $\mathcal{R}_{5,5}\subseteq C_{1,5}$. Furthermore, using Corollary \ref{cor:latin}, $\mathcal{R}_{2,5}\subseteq C_{3,4}$, $\mathcal{R}_{3,5}\subseteq C_{2,4}$, $\mathcal{R}_{4,5}\subseteq C_{2,3}$.

\begin{figure}[htbp]\centering
{\renewcommand{\arraystretch}{1.5}
\begin{tabular}{[C{24pt}"C{24pt}|C{24pt}|C{24pt}|C{24pt}|C{24pt}]}
    \thickhline
    $*$ & $C_1$ & $C_2$ & $C_3$ & $C_4$ & $C_5$\\
    \thickhline
    $C_1$ & $C_1$ &  &  &  & $C_5$\\
    \hline
    $C_2$ & $C_2$ &  &  &  & $C_{3,4}$\\
    \hline
    $C_3$ & $C_3$ &  &  &  & $C_{2,4}$\\
    \hline
    $C_4$ & $C_4$ &  &  &  & $C_{2,3}$\\
    \hline
    $C_5$ & $C_5$ &  &  &  & $C_{1,5}$\\
    \thickhline
\end{tabular}}
\captionof{table}{Cycle quandle table for $(X,*)$ when $1=\ell_1<\ell_2<\ell_3<\ell_4<\ell_5$ and $\ell_2\nmid \ell_5$, $\ell_3\nmid \ell_5$, $\ell_4\nmid \ell_5$.}\label{table:8}
\end{figure}

Now, by assertion \textit{3.} in Theorem \ref{thm:perm}, as $R_{a_5}(1)\in C_5$, we have $R_1^{ R_{a_5}(1)-a_4} R_{a_5} R_1^{-( R_{a_5}(1)-a_4)}= R_{a_5} R_1 R_{a_5}^{-1}$. Evaluating at $z\in C_4$, we conclude that \[C_{2,3}\ni R_1^{R_{a_5}(1)-a_4} R_{a_5} R_1^{-( R_{a_5}(1)-a_4)}(z)= R_{a_5} R_1 R_{a_5}^{-1}(z).\] Notice that $R_1 R_{a_5}^{-1}(z)\in C_{2,3}$. So, for each $z\in C_4$, we have that $R_{a_5}(z),R_{a_5}(R_1 R_{a_5}^{-1}(z))\in C_{2,3}$. In particular, as $R_{a_5}$ is bijective and $R_1 R_{a_5}^{-1}(z)\in C_{2,3}$ then,$$R_{a_5}(C_4)\cap R_{a_5}(R_1 R_{a_5}^{-1}(C_4))= \emptyset .$$ Therefore, $$|C_4|+|C_4|=|R_{a_5}(C_4)\cup R_{a_5}(R_1 R_{a_5}^{-1}(C_4))|\leq |C_{2,3}|$$ that is, $\lvert C_{2,3}\rvert\geq 2\cdot\lvert C_4\rvert$, which conflicts with the standing assumptions.
\end{enumerate}
This concludes the proof of Lemma \ref{Case5}.
\end{proof}
The proof of Hayashi's Conjecture for $c=5$ is complete concluding the proof of Proposition \ref{prop:c5}.
\end{proof}

\begin{example}
The quandle $Q_{15,3}$ (cf. \cite{Vendramin}) is a connected quandle of order $15$ with profile $(1, 2, 4, 4, 4)$. Furthermore, $1\mid4$, $2\mid4$ and $4\mid4$, in agreement with Proposition \ref{prop:c5}.
\end{example}

\section{Final Remarks}\label{sctn:further}

The approach  used here to prove Hayashi's Conjecture for $c\in\{3,4,5\}$ might give ideas to prove the same conjecture for greater values of $c$. Hopefully, more general arguments will be developed so that computer resources would be useful. Alternatively, the same approach could  be used to identify families of quandles for which  this conjecture does not apply.

\end{document}